\date{Juin 2016}
\title{The Liouville function in short intervals}
\author{Kannan SOUNDARARAJAN}
\address{Stanford University\\
Department of Mathematics\\
450 Serra Mall, Building 380\\ 
Stanford, CA 94305-2125, U.S.A.}
\email{ksound@math.stanford.edu}
\begin{document}
  
  \maketitle
  \noindent {\it R\'esum\'e\/}:\footnote{Je sais gr\'e au Prof.\ Tokieda d'avoir bien voulu traduire ce r\'esum\'e en 
fran{\c c}ais.}\footnote{Je remercie  Tokieda d'avoir traduit la note ci-dessus en fran{\c c}ais.} 
\footnote{Je remercie Tokieda d'avoir traduit la note ci-dessus en fran{\c c}ais.}
    La fonction de Liouville $\lambda$ est une fonction compl\`etement multiplicative \`a valeur
$\lambda(n) = +1$ [resp.\ $-1$] si $n$ admet un nombre pair [resp.\ impair] de facteurs premiers,
compt\'es avec multiplicit\'e.  On s'attend \`a ce qu'elle se comporte comme une collection  \guillemotleft al\'eatoire\guillemotright  \ 
 de signes, $+1$ et $-1$ \'etant \'equiprobables.  Par exemple, une conjecture c\'el\`ebre de Chowla dit que les
valeurs $\lambda(n)$ et $\lambda(n+1)$ (plus g\'en\'eralement en arguments translat\'es par
$k$ entiers distincts fixes) ont corr\'elation nulle.  Selon une autre croyance r\'epandue,
presque tous les intervalles de longueur divergeant vers l'infini devraient donner \`a peu pr\`es le
m\^eme nombre de valeurs $+1$ et $-1$ de $\lambda$.  R\'ecemment Matom\"aki et
Radziwi\l\l\ ont \'etabli que cette croyance \'etait en effet vraie, et de plus \'etabli une variante d'un 
tel r\'esultat pour une classe g\'en\'erale de fonctions multiplicatives.  Leur collaboration ult\'erieure 
avec Tao a conduit ensuite \`a la d\'emonstration des versions moyennis\'ees de la conjecture de 
Chowla, ainsi qu'\`a celle de l'existence de nouveaux comportements de signes de la fonction
de Liouville.  Enfin un dernier travail de Tao v\'erifie une version logarithmique de ladite conjecture et,
de l\`a, r\'esout la conjecture de la discr\'epance d'Erd{\H o}s.  Dans ce S\'eminaire je vais exposer
quelques-unes des id\'ees ma{\^\i}tresses sous-jacentes au travail de Matom\"aki et
Radziwi\l\l.

 \section{Introduction}

The Liouville function $\lambda$ is defined by setting $\lambda(n) =1$ if 
$n$ is composed of an even number of prime factors (counted with multiplicity) and $-1$ if $n$ is composed 
of an odd number of prime factors.  Thus, it is a completely multiplicative function taking the value $-1$ at all primes $p$.  
The Liouville function is closely related to the M{\" o}bius function $\mu$, which equals $\lambda$ on square-free integers, and which equals $0$ on integers that are divisible by the square of a prime.  

The Liouville function takes the 
values $1$ and $-1$ with about equal frequency: as $x\to \infty$ 
\begin{equation} 
\label{1.1} 
\sum_{n\le x} \lambda(n)  = o(x), 
\end{equation}
and this statement (or the closely related estimate $\sum_{n\le x} \mu(n) = o(x)$) is equivalent to the prime number theorem.   Much more is expected to be true, and the sequence of values of $\lambda(n)$ should appear more or less like a random sequence of $\pm 1$.   For example, one expects that the sum in \eqref{1.1} has ``square-root cancelation":  for any $\epsilon >0$
\begin{equation} 
\label{1.2} 
\sum_{n\le x} \lambda(n) = O(x^{\frac 12 +\epsilon}), 
\end{equation} 
and this bound is equivalent to the Riemann Hypothesis (for a more precise version of this equivalence see \cite{S}).  In particular, the Riemann Hypothesis implies that 
\begin{equation} 
\label{1.3} 
\sum_{x< n \le x+ h} \lambda(n) = o(h), \qquad \text{provided  } h >x^{\frac 12 +\epsilon},  
\end{equation} 
and a refinement of this, due to Maier and Montgomery \cite{MM}, permits the range $h > x^{1/2}(\log x)^A$ for a suitable constant $A$.
Unconditionally, Motohashi \cite{Mo} and Ramachandra \cite{Ra} showed independently that 
\begin{equation} 
\label{1.4} 
\sum_{x < n \le x+h} \lambda(n) = o(h), \qquad \text{provided  } h> x^{\frac 7{12}+\epsilon}.  
\end{equation} 
The analogy with random $\pm 1$ sequences would suggest cancelation in every short interval 
as soon as $h> x^{\epsilon}$ (perhaps even $h \ge (\log x)^{1+\delta}$ is sufficient).  Instead of asking for 
cancelation in every short interval, if we are content with results that hold for almost all short intervals, then 
more is known.  Assuming the Riemann Hypothesis, Gao \cite{G} established that if $h \ge (\log X)^{A}$ for a suitable (large) 
constant $A$, then 
\begin{equation} 
\label{1.5} 
\int_X^{2X} \Big| \sum_{x<n\le x+h} \lambda(n) \Big|^2 dx = o(X h^2), 
\end{equation} 
so that almost all intervals $[x,x+h]$ with $X\le x\le 2X$ exhibit cancelation in the values of $\lambda(n)$.  
Unconditionally 
one can use zero density results to show that almost all intervals have substantial cancelation if $h> X^{1/6+\epsilon}$. 
To be precise, Gao's result (as well as the results in \cite{S}, \cite{MM}, \cite{Mo}, \cite{Ra}) was established 
for the M{\" o}bius function, but only minor changes are needed to cover the Liouville function.

The results described above closely parallel results about the distribution of prime numbers.  We have already 
mentioned that \eqref{1.1} is equivalent to the prime number theorem:
\begin{equation} 
\label{1.6} 
\psi(x) = \sum_{n\le x} \Lambda(n) = x+o(x), 
\end{equation} 
where $\Lambda(n)$, the von Mangoldt function, equals $\log p$ if $n>1$ is a power of the prime $p$, and $0$ 
otherwise.   Similarly, in analogy with \eqref{1.2}, a classical equivalent formulation of the Riemann Hypothesis states that 
\begin{equation} 
\label{1.7} 
\psi(x) = x+ O\big(x^{\frac 12} (\log x)^2 \big),
\end{equation} 
so that a more precise version of \eqref{1.3} holds 
\begin{equation} 
\label{1.8} 
\psi(x+h) - \psi(x)= \sum_{x < n\le x+h} \Lambda(n)  = h+ o(h), \qquad \text{provided  } h> x^{1/2}(\log x)^{2+\epsilon}. 
\end{equation} 
Analogously to \eqref{1.4}, Huxley \cite{H} (building on a number of previous results) showed unconditionally that 
  \begin{equation} 
  \label{1.9} 
  \sum_{x<n\le x+h} \Lambda(n) \sim h, \qquad \text{provided  } h> x^{\frac 7{12}+\epsilon}. 
  \end{equation} 
 Finally, Selberg \cite{Se} established that if the Riemann hypothesis holds, and  $h \ge (\log X)^{2+\epsilon}$ then 
 \begin{equation} 
 \label{1.10} 
 \int_X^{2X} \Big| \sum_{x<n\le x+h} \Lambda(n) - h \Big|^2 dx = o(Xh^2), 
 \end{equation} 
 so that almost all such short intervals contain the right number of primes.  Unconditionally one can use Huxley's zero density 
 estimates to show that almost all intervals of length $h>X^{1/6+\epsilon}$ contain the right 
 number of primes. 
 
The results on primes invariably preceded their analogues for the Liouville (or M{\" o}bius) function, and 
often there were some extra complications in the latter case.  For example, the work of Gao is much more 
involved than Selberg's estimate \eqref{1.10}, and the corresponding range in \eqref{1.5} is a little weaker.  
Although there has been dramatic recent progress in sieve theory and understanding gaps between primes, the estimates \eqref{1.8}, \eqref{1.9} and \eqref{1.10} have not been substantially improved for a long time.  So it came as a great surprise when Matom{\" a}ki and Radziwi{\l }{\l} established that the Liouville function exhibits cancelation in almost all short intervals, as soon as the length of the interval tends to infinity --- that is, obtaining qualitatively a definitive version of \eqref{1.5} unconditionally!

 \begin{theo} [Matom{\" a}ki and Radziwi{\l}{\l} \cite{MR2}] \label{thm1}  For any $\epsilon >0$ there exists $H(\epsilon)$ 
 such that for all 
 $H(\epsilon) < h\le X$ we have 
 $$ 
 \int_X^{2X} \Big| \sum_{x<n\le x+h} \lambda(n) \Big|^2 dx  \le \epsilon Xh^2.
 $$ 
 Consequently, for $H(\epsilon) < h\le X$ one has 
 $$ 
\Big| \sum_{x < n\le x+h} \lambda(n) \Big| \le \epsilon^{\frac 13}h, 
$$
except for at most $\epsilon^{\frac 13} X$ integers $x$ between $X$ and $2X$.  
\end{theo}

 As mentioned earlier, the sequence $\lambda(n)$ is expected to resemble a random $\pm 1$ sequence, and 
 the expected square-root cancelation in the interval $[1,x]$ and cancelation in short intervals $[x,x+h]$ reflect the corresponding cancelations in random $\pm 1$ sequences.  Another natural way to capture the apparent randomness of $\lambda(n)$ is to fix a pattern of consecutive signs $\epsilon_1$, $\ldots$, $\epsilon_k$ (each $\epsilon_j$ being $\pm 1$) and ask for the number of $n$ such that $\lambda(n+j) = \epsilon_j$ for each $1\le j\le k$.  
 If the Liouville function behaved randomly, then one would expect that the density of $n$ with this sign pattern should be $1/2^k$.  
 
 \begin{conj} [Chowla \cite{C}]  Let $k\ge 1$ be an integer, and let $\epsilon_j = \pm 1$ for $1\le j\le k$.  Then as $N \to \infty$ 
 \begin{equation} 
 \label{C1.1} 
 |\{ n\le N: \lambda(n+j) = \epsilon_j \text{  for all  } 1\le j\le k\}|  = \Big( \frac{1}{2^k} + o(1) \Big) N. 
 \end{equation} 
 Moreover, if $h_1$, $\ldots$, $h_k$ are any $k$ distinct integers then, as $N \to \infty$,  
 \begin{equation} 
 \label{C1.2} 
 \sum_{n\le N} \lambda(n+h_1)\lambda(n+h_2)\cdots \lambda(n+h_k) = o(N). 
 \end{equation} 
 \end{conj}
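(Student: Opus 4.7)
My first step is to reduce (\ref{C1.1}) to (\ref{C1.2}). Because $\lambda$ takes values $\pm 1$, the indicator of the event ``$\lambda(n+j)=\epsilon_j$ for all $1\le j\le k$'' equals
$$\prod_{j=1}^k\frac{1+\epsilon_j\lambda(n+j)}{2}\;=\;\frac{1}{2^k}\sum_{S\subseteq\{1,\dots,k\}}\Bigl(\prod_{j\in S}\epsilon_j\Bigr)\prod_{j\in S}\lambda(n+j).$$
Summing over $n\le N$, the $S=\emptyset$ term yields the predicted main term $N/2^k$, while each of the $2^k-1$ non-empty subsets contributes $o(N)$ under (\ref{C1.2}) applied to the distinct shifts indexed by $S$; hence (\ref{C1.1}) is a formal consequence of (\ref{C1.2}). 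The case $k=1$ of (\ref{C1.2}) is the prime number theorem (\ref{1.1}), so all the substantive content is in the higher-order correlations.

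\textbf{Two-point correlations.} For $k=2$ I would exploit the complete multiplicativity of $\lambda$. Since $\lambda(p)^2=1$ for every prime $p$, the identity $\lambda(pm)\lambda(p(m+h))=\lambda(m)\lambda(m+h)$ holds, and summing over $m\le N/p$ gives
$$\sum_{\substack{n\le N\\ p\mid n}}\lambda(n)\lambda(n+ph)\;=\;\sum_{m\le N/p}\lambda(m)\lambda(m+h).$$
Averaging over primes $p\in[P,2P]$ with $P=P(N)\to\infty$ sufficiently slowly, and using that the sum on the left ought to be equidistributed in residue classes modulo $p$, relates the shift-$h$ correlation at scale $N/p$ to the shift-$ph$ correlation at scale $N$. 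The equidistribution input is precisely where Theorem~\ref{thm1} enters, since it provides cancellation of $\lambda$ in almost all short intervals; combined with an entropy-decrement argument (due to Tao) to quantify the loss in equidistribution, this yields the logarithmically averaged bound
$$\frac{1}{\log N}\sum_{n\le N}\frac{\lambda(n)\lambda(n+h)}{n}\;=\;o(1).$$

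\textbf{Principal obstacles.} Two difficulties obstruct the statement as written. First, passing from logarithmic to Ces\`aro averages is not formal: a set of scales of arbitrarily small logarithmic measure could in principle carry the bulk of the correlation mass, and nothing in Theorem~\ref{thm1} rules this out. Second, for $k\ge 3$ the dilation trick breaks down, because after the change of variable $n\mapsto pn$ the residual error involves a Gowers $U^{k-1}$-type expression in $\lambda$, and no short-interval analogue of an inverse Gowers uniformity theorem is currently available for multiplicative functions. Either issue individually is a central open problem, and in combination they constitute the core reason Chowla's conjecture remains out of reach in the form stated above; the sketched approach recovers only the partial (logarithmic, $k=2$) results now known through the work of Tao and of Matom\"aki--Radziwi\l\l--Tao.
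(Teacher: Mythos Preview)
The statement you were asked to address is a \emph{conjecture}; the paper does not prove it and explicitly presents it as open. The only argument the paper gives in connection with this conjecture is the brief observation immediately following it: that expanding $\prod_{j=1}^{k}(1+\epsilon_j\lambda(n+j))$ shows \eqref{C1.2} implies \eqref{C1.1}, and conversely that \eqref{C1.1} for all $k$ implies \eqref{C1.2}. Your first paragraph reproduces exactly this reduction, with the same indicator-product identity, so on the one point where there is something to compare, you match the paper.

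Beyond that, your proposal is not a proof attempt so much as an accurate survey of why no proof exists. Your account of the $k=2$ logarithmic case (the dilation $n\mapsto pn$, the role of short-interval cancellation from Theorem~\ref{thm1}, Tao's entropy-decrement argument) is consistent with what the paper reports around \eqref{Tao1} and Theorem~\ref{thm3}, and your identification of the two principal obstacles --- passing from logarithmic to Ces\`aro averaging, and the lack of a usable higher-uniformity input for $k\ge 3$ --- is correct. So there is no gap to name: you have correctly recognised that the target is an open conjecture, reproduced the one formal implication the paper records, and given a sound explanation of the present limits of the method.
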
 

Observe that $\prod_{j=1}^{k} (1+\epsilon_j \lambda(n+j)) =2^k$ if $\lambda(n+j) =\epsilon_j$, and $0$ otherwise. 
Expanding this product out, and summing over $n$, it follows that \eqref{C1.2} implies \eqref{C1.1}.   It is also clear that 
\eqref{C1.2} follows if \eqref{C1.1} holds for all $k$.  

The Chowla conjectures resemble the Hardy-Littlewood conjectures on prime $k$-tuples, and little is known in their direction.   The prime number theorem, in its equivalent form \eqref{1.1}, shows that $\lambda(n)=1$ and $-1$ about equally often, so 
that \eqref{C1.1} holds for $k=1$.   When $k=2$, there are four possible patterns of signs for $\lambda(n+1)$ and $\lambda(n+2)$, and as a consequence of Theorem \ref{thm1} 
it follows that each of these patterns appears a positive proportion of the time.   
 For $k=3$, Hildebrand \cite{Hi} was able to show that all eight patterns of three consecutive signs occur infinitely often.   By combining Hildebrand's ideas with the work in \cite{MR2}, Matom{\" a}ki, Radziwi{\l}{\l}, and Tao \cite{MRT2} have shown that all eight patterns appear a positive proportion of the time.   It is still unknown whether all sixteen four term patterns of signs appear infinitely often (see \cite{BE} for some related work).

\begin{theo}[Matom{\" a}ki, Radziwi{\l}{\l}, and Tao \cite{MRT2}] \label{thm2} For any of the eight choices of $\epsilon_1$, $\epsilon_2$, $\epsilon_3$ all $\pm 1$ we have 
$$ 
\liminf_{N \to \infty} \frac{1}{N} |\{ n\le N: \lambda(n+j)=\epsilon_j, \ \ j=1, 2, 3 \}| > 0.  
$$ 
\end{theo}

We turn now to \eqref{C1.2}, which is currently open even in the simplest case of showing $\sum_{n\le N} \lambda(n) \lambda(n+1) = o(N)$.  By refining 
the ideas in \cite{MR2}, Matom{\" a}ki, Radziwi{\l \l} and Tao \cite{MRT1} showed that a version of Chowla's conjecture \eqref{C1.2} holds  if we permit a 
small averaging over the parameters $h_1$, $\ldots$, $h_k$.   
 
\begin{theo}[Matom{\" a}ki, Radziwi{\l}{\l}, and Tao \cite{MRT1}] \label{thm3}  Let $k$ be a natural number, and let $\epsilon > 0$ be given. 
There exists $h(\epsilon,k)$ such that for all $x\ge h \ge h(\epsilon,k)$ we have 
$$ 
\sum_{1\le h_1, \ldots, h_k \le h } \Big| \sum_{n\le x}  \lambda(n+h_1) \cdots \lambda(n+h_k) \Big| \le \epsilon h^k x. 
$$ 
\end{theo} 

Building on the ideas in \cite{MR2} and \cite{MRT2}, and introducing further new ideas, Tao \cite{T1} has established a 
logarithmic version of Chowla's conjecture \eqref{C1.2} in the case $k=2$.   A lovely and easily stated consequence of Tao's work is 
\begin{equation} 
\label{Tao1} 
\sum_{n\le x} \frac{\lambda(n)\lambda(n+1)}{n}  = o(\log x). 
\end{equation}  
Results such as \eqref{Tao1}, together with their extensions to general multiplicative functions bounded by $1$, form a crucial 
part of Tao's remarkable resolution of the Erd{\H o}s discrepancy problem \cite{T2}:  If $f$ is any function from the positive integers to $\{-1, +1\}$ then 
$$ 
\sup_{d, n } \Big | \sum_{j=1}^{n} f(jd) \Big| = \infty. 
$$

While we have so far confined ourselves to the Liouville function, the work of Matom{\" a}ki and Radziwi{\l \l} applies more 
broadly to general classes of multiplicative functions.   For example, Theorem \ref{thm1} holds in the following more general form.  
Let $f$ be a multiplicative function with $-1\le f(n) \le 1$ for all $n$.  For any $\epsilon >0$ there exists $H(\epsilon)$ such that 
if $H(\epsilon) < h\le X$ then 
\begin{equation} 
\label{Mult1} 
\Big| \sum_{x<n\le x+h} f(n) - \frac{h}{X} \sum_{X \le n \le 2X} f(n) \Big| \le \epsilon h, 
\end{equation} 
for all but $\epsilon X$ integers $x$ between $X$ and $2X$.  In other words, for almost all intervals of length $h$, the local average of $f$ in 
the short interval $[x,x+h]$ is close to the global average of $f$ between $X$ and $2X$.  We should point out that 
this result holds uniformly for all multiplicative functions $f$ with $-1\le f(n)\le 1$ --- that is, the quantity $H(\epsilon)$ depends only on $\epsilon$ and is independent of $f$.  
A still more general formulation (needed for Theorem \ref{thm3}) 
may be found in Appendix 1 of \cite{MRT1}.

The work of Matom{\" a}ki and Radziwi{\l \l} permits a number of elegant corollaries, and we highlight two such results; see Section 8 for a brief discussion of their proofs.  



\begin{coro} [Matom{\" a}ki and Radziwi{\l \l} \cite{MR2}] \label{cor2}  For every $\epsilon>0$, there exists a constant $C(\epsilon)$ such that 
for all large $N$, the interval $[N,N+C(\epsilon) \sqrt{N}]$ contains an integer all of whose prime factors are below $N^{\epsilon}$. 
\end{coro}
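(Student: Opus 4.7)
The plan is to apply the generalized Matom{\"a}ki--Radziwi{\l}{\l} estimate \eqref{Mult1} to the completely multiplicative function $f(n) := \mathbf{1}[P^+(n) \le N^{\epsilon}]$, where $P^+(n)$ denotes the largest prime factor of $n$. Since $0 \le f \le 1$, the estimate applies uniformly. By the Dickman--de Bruijn theory, for $X \asymp \sqrt{N}$ the mean $X^{-1}\sum_{X \le n \le 2X} f(n)$ tends, as $N \to \infty$, to $\rho_0 := \rho(1/(2\epsilon)) > 0$, where $\rho$ is the Dickman function.

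The key idea is to produce the smooth integer in $[N, N+C\sqrt{N}]$ as a product $n = ab$ with $a$ and $b$ both $N^{\epsilon}$-smooth and of size $\asymp \sqrt{N}$. Set $\epsilon' := \rho_0/100$, $h_0 := H(\epsilon')$, and $X := \sqrt{N}$. Applying \eqref{Mult1} at scale $X$ with $h = h_0$, we deduce that for all integers $x \in [X, 2X]$, except at most $\epsilon' X$ ``bad'' exceptions, the interval $[x, x+h_0]$ contains at least $\tfrac{1}{2} h_0 \rho_0$ many $N^{\epsilon}$-smooth integers; by choosing $h_0$ large enough (as we may, since the conclusion of \eqref{Mult1} persists for larger $h$), this count is $\ge 1$.

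Now consider integers $a \in [X/2, X]$, and for each such $a$ set $x_a := \lceil N/a \rceil \in [X, 2X+1]$. Call $a$ ``good'' if $x_a$ is good in the MR sense above. Since $|d(N/a)/da| = N/a^2 \asymp 1$ on this range, each value of $x$ has $O(1)$ preimages under $a \mapsto x_a$, so the number of bad $a$ is $O(\epsilon' X)$. On the other hand, by Dickman--de Bruijn, the number of $N^{\epsilon}$-smooth $a \in [X/2, X]$ is $(1+o(1))(X/2)\rho_0$. With $\epsilon' = \rho_0/100$ there exists a smooth good integer $a \in [X/2, X]$; pick a smooth $b$ in $[x_a, x_a + h_0]$. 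Then $n := ab$ is $N^{\epsilon}$-smooth (a product of two $N^{\epsilon}$-smooth numbers), and
$$ N \le a\lceil N/a \rceil \le ab \le a(\lceil N/a \rceil + h_0) < N + (h_0+1)\sqrt{N}, $$
so $C(\epsilon) := h_0 + 1$ works.

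The main obstacle is that Theorem \ref{thm1}, being a statement about \emph{almost all} short intervals, does not directly supply a smooth integer in the \emph{specific} interval $[N, N+C\sqrt{N}]$: applying \eqref{Mult1} at scale $N$ with $h = C\sqrt{N}$ merely says that most (not all) starting points in $[N,2N]$ have smooth integers nearby. The factorization trick $n = ab$ circumvents this by replacing a single long interval with a whole family $\{[x_a, x_a + h_0]\}$ of short intervals indexed by smooth $a \in [X/2, X]$. Because this family has positive density in $a$ while the bad set has density only $O(\epsilon')$, a routine pigeonhole suffices; the only input needed beyond \eqref{Mult1} is the positivity $\rho_0 > 0$ of the Dickman density.
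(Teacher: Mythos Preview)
Your argument is correct and follows essentially the same factorization idea as the paper: produce the desired smooth number as a product $ab$ with $a,b\asymp\sqrt{N}$, both $N^{\epsilon}$-smooth, using \eqref{Mult1} at scale $\sqrt{N}$ to guarantee smooth numbers in most short intervals there. The paper organizes the endgame slightly more symmetrically: rather than first fixing a smooth $a$ and then checking that $x_a=\lceil N/a\rceil$ is a good starting point, it observes that if the corollary failed then for every $x\in[\sqrt{N},2\sqrt{N}]$ at least one of $x$, $N/x$ must lie in the bad set $\mathcal{E}$, whence $\sqrt{N}\le\int_{\sqrt{N}}^{2\sqrt{N}}(\chi_{\mathcal E}(x)+\chi_{\mathcal E}(N/x))\,dx\le 4|\mathcal E|$, contradicting $|\mathcal E|\le\epsilon\sqrt{N}$. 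This involution trick avoids your preimage-counting step and the separate appeal to the Dickman count of smooth $a$'s, but the substance is the same.
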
  

Integers without large prime factors (called {\sl smooth} or {\sl friable} integers) have been extensively studied,  and the existence of 
smooth numbers in short intervals is of interest in understanding the complexity of factoring algorithms.  Previously Corollary \ref{cor2} 
was only known conditionally on the Riemann hypothesis (see \cite{So2}).  Further, \eqref{Mult1} shows that almost all intervals with length 
tending to infinity contain the right density of smooth numbers (see Corollary 6 of \cite{MR2}).

\begin{coro}[Matom{\" a}ki and Radziwi{\l\l} \cite{MR2}]  \label{cor3} Let $f$ be a real valued multiplicative function such that (i) $f(p)<0$ for 
some prime $p$, and (ii) $f(n)\neq 0$ for a positive proportion of integers $n$.   Then for all large $N$ the non-zero values of $f(n)$ with 
$n\le N$ exhibit a positive proportion of sign changes: precisely, for some $\delta >0$ and all large $N$, there are $K \ge \delta N$ integers $1\le n_1 < n_2 < \ldots < n_K \le N$ 
such that $f(n_j) f(n_{j+1}) <0$ for all $1\le j\le K-1$.  
\end{coro}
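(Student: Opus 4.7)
My plan is to apply the short-interval estimate \eqref{Mult1} to the multiplicative function $g(n) := \mathrm{sgn}(f(n))$ and to its absolute value $|g|$, both of which are multiplicative and take values in $[-1,1]$. I then exploit hypothesis~(i) to establish a uniform strict gap $H(N) - |G(N)| \ge \eta > 0$ between the global averages
$$
G(N) := \frac{1}{N}\sum_{N<n\le 2N} g(n), \qquad H(N) := \frac{1}{N}\sum_{N<n\le 2N} |g(n)|,
$$
valid for all large~$N$. Combined with \eqref{Mult1}, this forces most short intervals $(x,x+h]$ to contain both positive and negative values of~$f$; a counting argument then produces a positive proportion of sign changes.

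Given $\varepsilon > 0$, applying \eqref{Mult1} to both $g$ and $|g|$ yields $h = h(\varepsilon)$ such that, for all but $O(\varepsilon N)$ integers $x \in [N, 2N]$,
$$
\sum_{x<n\le x+h} g(n) = hG(N) + O(\varepsilon h), \qquad \sum_{x<n\le x+h}|g(n)| = hH(N) + O(\varepsilon h).
$$
For the global gap, I invoke Wirsing's mean value theorem for real multiplicative functions bounded by~$1$: the limit $M(|g|) = \lim H(N)$ exists and equals the density of $\{n : f(n) \neq 0\}$, so $M(|g|) \ge \delta > 0$ by hypothesis~(ii). If $M(g) = 0$ I am done with $\eta = \delta/2$; otherwise $M(g) = \prod_p E_p(g)$ and $M(|g|) = \prod_p E_p(|g|)$ are convergent Euler products, and at the prime~$p_0$ supplied by~(i) a direct computation gives
$$
E_{p_0}(|g|) - |E_{p_0}(g)| \;\ge\; \Bigl(1 - \frac{1}{p_0}\Bigr)\frac{2}{p_0} \;=\; \frac{2(p_0-1)}{p_0^2},
$$
since $|g(p_0)| - g(p_0) = 2$ and the higher terms $|g(p_0^k)| - g(p_0^k)$ are non-negative. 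The pointwise bound $|E_p(g)| \le E_p(|g|)$ at every other prime propagates this to $M(|g|) - |M(g)| \ge 2\delta(p_0-1)/p_0^2 =: \eta$.

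Writing $P_\pm(x)$ for the number of $n \in (x, x+h]$ with $g(n) = \pm 1$, the identity
$$
2\min(P_+(x), P_-(x)) \;=\; \sum_{x<n\le x+h}|g(n)| \;-\; \Bigl|\sum_{x<n\le x+h} g(n)\Bigr|
$$
combined with the two preceding estimates gives $P_\pm(x) \ge h\eta/4$ for most $x \in [N, 2N]$, provided $\varepsilon < \eta/4$. Hence each such interval $(x,x+h]$ contains both signs of~$f$, so the nonzero values of $f$ on it undergo at least one sign change. Any single sign change between consecutive nonzero values $n_j, n_{j+1}$ lies in at most $h+1$ intervals $(x,x+h]$ with integer~$x$, so the number of sign changes in $(N, 2N+h]$ is at least $(1 - O(\varepsilon))N/(h+1) \gg N$; a dyadic sum over $N, N/2, N/4, \ldots$ then yields a positive proportion of sign changes in $[1, N]$, as required.

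The principal difficulty is the uniform global gap $H(N) - |G(N)| \ge \eta$ for \emph{all} large~$N$. Hypothesis~(i) localizes the discrepancy between $g$ and $|g|$ at the single prime~$p_0$, but one must check that it is neither erased by cancellation among the remaining Euler factors nor spoiled by adversarial values of $g(p_0^k)$ at higher prime powers. The lower bound $M(|g|) \ge \delta$ coming from~(ii) prevents the non-$p_0$ part of the product from collapsing to~$0$, while the non-negativity of every term $(|g(p_0^k)| - g(p_0^k))/p_0^k$ ensures the leading contribution $2/p_0$ survives intact; together these observations secure the desired gap.
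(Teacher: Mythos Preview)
Your proof is correct and follows essentially the same approach as the paper's sketch in Section~8: reduce to $g=\mathrm{sgn}\,f$, apply Wirsing's theorem to $g$ and $|g|$ to get mean values $\beta$ and $\alpha$ with $0\le\beta<\alpha$, and then use \eqref{Mult1} for both to force sign changes in most short intervals. You are in fact more careful than the paper in two places---you make the strict inequality $\alpha>\beta$ quantitative via the Euler factor at $p_0$ (the paper merely asserts $\beta<\alpha$), and you spell out the double-counting step converting ``most intervals see both signs'' into $\gg N$ sign changes---but the underlying strategy is identical.
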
 

The conditions (i) and (ii) imposed in Corollary \ref{cor3} are plainly necessary for $f$ to have a positive proportion of 
sign changes.  For the Liouville function, which is never zero, Corollary \ref{cor3} says that $\lambda(n) = -\lambda(n+1)$ 
for a positive proportion of values $n$; of course this fact is also a special case of Theorem \ref{thm2}.   Even for the M{\" o}bius function, 
Corollary \ref{cor3} is new, and improves upon the earlier work of Harman, Pintz and Wolke \cite{HPW}; for general multiplicative functions, it improves upon the 
earlier work of Hildebrand \cite{Hi2} and Croot \cite{Cr}.   Corollary \ref{cor3} also applies to the Hecke eigenvalues of holomorphic newforms, where 
Matom{\" a}ki and Radziwi{\l \l} \cite{MR1} had recently established such a result by different means.   The sign changes of Hecke eigenvalues are related 
to the location of ``real zeros" of the newform $f(z)$ (see \cite{GhSa}), and this link formed the initial impetus for the work of Matom{\" a}ki and Radziwi{\l \l}.

The rest of this article will give a sketch of some of the ideas behind Theorem \ref{thm1}; the reader may also find it useful to consult \cite{MR3, T3}.   
For ease of exposition, in our description of 
the results of Matom{\" a}ki and Radziwi{\l \l} we have chosen to give a qualitative sense of their work.  In fact Matom{\" a}ki and Radziwi{\l \l} establish  
Theorem \ref{thm1} in the stronger quantitative form (for any $2\le h\le X$) 
$$ 
\Big| \sum_{x< n \le x+ h} \lambda(n) \Big| \ll \frac{h}{(\log h)^{\delta}}  
$$ 
except for at most $X (\log h)^{-\delta}$ integers $x \in [X,2X]$ -- here $\delta$ is a small positive constant, which may be taken as $1/200$ for 
example.  The limit of their technique would be a saving of about $1/\log h$.  In this context, the Riemann hypothesis arguments 
would permit better quantifications: for example, Selberg estimates the quantity in \eqref{1.10} as $O(Xh (\log X)^2)$, and similarly Gao's work shows that the 
variance in \eqref{1.5} is $O(Xh (\log X)^A)$ for a suitable constant $A$.    Thus for a restricted range of $h$, the conditional results exhibit almost a square-root 
  cancelation.   As $h$ tends to infinity, one expects that the sum of the Liouville function in a randomly chosen interval of length $h$ should be distributed approximately like a normal random   variable with mean zero and variance $h$; see \cite{GC}, and \cite{Ng} in the nearly identical context of the M{\" o}bius function, and \cite{MS} for analogous 
  conjectures on primes in short intervals.

 {\bf Acknowledgments.}  I am partly supported through a grant from the National Science Foundation (NSF), and a Simons Investigator 
 grant from the Simons Foundation.    I am grateful to Zeb Brady, Alexandra Florea, Andrew Granville, 
 Emmanuel Kowalski, Robert Lemke Oliver, Kaisa Matom{\" a}ki, Maksym Radziwi{\l \l}, Ho Chung Siu, and Frank Thorne for helpful remarks.  

\section{Preliminaries} 

\subsection{General Plancherel bounds}  

Qualitatively there is no difference between the $L^2$-estimate stated in Theorem \ref{thm1} and 
the $L^1$-estimate 
$$ 
\int_X^{2X} \Big| \sum_{x< n\le x+h} \lambda(n) \Big| dx \le \epsilon X h. 
$$ 
However, the $L^2$ formulation has the advantage that we can use the Plancherel formula to transform the 
problem to understanding Dirichlet polynomials.  We begin by formulating this generally.

\begin{lemm} \label{lem1} 
Let $a(n)$ (for $n=1, 2, 3 \ldots$) denote a sequence of complex numbers  and we suppose that 
$a(n) =0$ for large enough $n$.   Define the associated Dirichlet polynomial 
\begin{equation*} 
\label{2.1} 
A(y) = \sum_{n} a(n) n^{iy}. 
\end{equation*}
Let $T \ge 1$ be a real number.  Then 
$$ 
\int_{0}^{\infty} \Big| \sum_{x e^{-1/T}< n \le x e^{1/T}} a(n) \Big|^2 \frac{dx}{x} =  
\frac{2}{\pi} \int_{-\infty}^{\infty} |A(y)|^2 \Big(\frac{\sin(y/T)}{y}\Big)^2 dy. 
$$ 
\end{lemm}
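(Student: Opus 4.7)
The plan is to reduce the claim to the classical Plancherel theorem via the substitution $u=\log x$, which turns the measure $dx/x$ into Lebesgue measure $du$ on $\mathbb R$. The condition $xe^{-1/T}<n\le xe^{1/T}$ is equivalent to $u-\log n\in[-1/T,1/T)$, so if we set
$$
F(u) \;=\; \sum_{n}a(n)\,\chi_T(u-\log n),\qquad \chi_T(t)=\mathbf 1_{[-1/T,1/T)}(t),
$$
then the left-hand side becomes $\int_{-\infty}^{\infty}|F(u)|^{2}\,du$.

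Next, I would recognise $F$ as the convolution $F=\mu\ast\chi_T$, where $\mu=\sum_n a(n)\,\delta_{\log n}$ is the finitely supported atomic measure on $\mathbb R$ associated to the sequence $(a(n))$ — this is legitimate because, by hypothesis, all but finitely many $a(n)$ vanish, so $F$ is a finite sum of indicators of intervals and belongs to $L^{2}\cap L^{1}$. With the convention $\widehat f(y)=\int f(u)e^{-iuy}\,du$, a direct computation gives
$$
\widehat{\chi_T}(y) \;=\; \int_{-1/T}^{1/T}e^{-iuy}\,du \;=\; \frac{2\sin(y/T)}{y},
$$
while the Fourier transform of $\mu$ (viewed distributionally, or simply evaluated on the Schwartz test function $e^{-iyu}$) is
$$
\widehat{\mu}(y) \;=\; \sum_{n}a(n)\,e^{-iy\log n} \;=\; \sum_{n}a(n)\,n^{-iy} \;=\; A(-y).
$$
Hence by the convolution theorem $\widehat F(y)=A(-y)\cdot 2\sin(y/T)/y$.

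Applying Plancherel's identity and then substituting $y\mapsto -y$ in the resulting integral (which leaves $\sin^2(y/T)/y^2$ invariant) yields
$$
\int_{-\infty}^{\infty}|F(u)|^{2}\,du \;=\; \frac{1}{2\pi}\int_{-\infty}^{\infty}|A(-y)|^{2}\,\frac{4\sin^{2}(y/T)}{y^{2}}\,dy \;=\; \frac{2}{\pi}\int_{-\infty}^{\infty}|A(y)|^{2}\Bigl(\frac{\sin(y/T)}{y}\Bigr)^{2}dy,
$$
which is exactly the asserted identity.

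There is no real obstacle: the only thing to watch is the justification of the convolution/Plancherel step for the atomic measure $\mu$. Since $a(n)=0$ for $n$ large, $F$ is compactly supported and a step function in $L^{1}(\mathbb R)\cap L^{2}(\mathbb R)$, so the Fourier transform is classical and Plancherel applies directly; equivalently, one can avoid distributions altogether by expanding $|F(u)|^{2}$ as a double sum $\sum_{m,n}a(m)\overline{a(n)}\,\chi_T(u-\log m)\chi_T(u-\log n)$, integrating in $u$ to get $\frac{1}{T}$ times the overlap of two unit intervals, and then recognising the result as the Plancherel pairing on the right after computing $\int |A(y)|^{2}(\sin(y/T)/y)^{2}\,dy$ by expanding $|A(y)|^{2}=\sum_{m,n}a(m)\overline{a(n)}(m/n)^{iy}$ and using the known integral $\int_{-\infty}^{\infty}e^{i\alpha y}(\sin(y/T)/y)^{2}\,dy = \frac{\pi}{T}\max(0,2/T-|\alpha|)\cdot T/... $ (the standard tent-function Fourier pair). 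Either route gives the stated equality.
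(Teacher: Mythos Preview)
Your argument is correct and is essentially the paper's own proof: the paper sets $f(x)=\sum_{e^{x-1/T}\le n\le e^{x+1/T}}a(n)$ (your $F$ after the substitution $u=\log x$), computes $\hat f(\xi)=A(-\xi)\cdot 2\sin(\xi/T)/\xi$ directly, and invokes Plancherel. Your convolution framing $F=\mu\ast\chi_T$ is just a repackaging of that same computation; the alternative double-sum route you sketch at the end is unnecessary (and the displayed tent-function integral there is left incomplete), but the main line is fine.
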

\begin{proof}  For any real number $x$ put 
$$ 
f(x) = \sum_{e^{x -1/T} \le n \le e^{x+1/T} } a(n), 
$$ 
so that its Fourier transform ${\hat f}(\xi)$ is given by 
$$ 
{\hat f}(\xi) = \int_{-\infty}^{\infty} f(x) e^{-ix\xi} dx = \sum_{n} a(n) \int_{\log n -1/T}^{\log n+1/T} e^{-ix\xi} dx 
= A(-\xi) \Big(\frac{2\sin(\xi/T)}{\xi}\Big). 
$$
The left side of the identity of the lemma is $\int_{-\infty}^{\infty} |f(x)|^2 dx$, and the right side is 
$\frac{1}{2\pi} \int_{-\infty}^{\infty} |{\hat f}(\xi)|^2 d\xi$, so that by Plancherel the stated identity holds. 
\end{proof}

In Lemma \ref{lem1} we have considered the sequence $a(n)$ in ``multiplicatively" short intervals 
$[xe^{-1/T}, xe^{1/T}]$ which is best suited for applying Plancherel, whereas in Theorem \ref{thm1} we 
are interested in ``additively" short intervals $[x,x+h]$.   A simple technical device (introduced by Saffari and Vaughan \cite{SV}) allows 
one to pass from the multiplicative situation to the additive one.  

\begin{lemm} \label{lem2}  Let $X$ be large, and let $a(n)$ and $A(y)$ be as in Lemma \ref{lem1}, and suppose that $a(n) =0$ unless $c_1 X \le n \le c_2 X$ for some 
positive constants $c_1$ and $c_2$.  Let  $h$ be a real number with $1\le h\le c_1X/10$.  Then 
$$ 
\int_0^{\infty} \Big| \sum_{x< n\le x+h} a(n) \Big|^2 dx \ll \frac{c_2^2}{c_1} X \int_{-\infty}^{\infty} |A(y)|^2 \min 
\Big( \frac{h^2}{c_1^2X^2}, \frac 1{y^2}\Big) dy. 
$$ 
\end{lemm}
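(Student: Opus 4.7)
The plan is to apply the Saffari--Vaughan device to reduce the additive short-interval estimate to the multiplicative short-interval estimate of Lemma \ref{lem1}. The natural choice is $T = c_1 X/h$, so that the log-width $1/T$ of the multiplicative interval matches (in fact dominates) the log-width $\log(1+h/x) \le h/x \le 1/T$ of the additive interval $(x,x+h]$ uniformly for $x$ in the effective support $[c_1 X, c_2 X]$.

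First I would realize the additive-interval sum as a weighted average of multiplicative-interval sums. The Saffari--Vaughan trick accomplishes this by averaging the operator $y \mapsto \sum_{ye^{-1/T}<n \le ye^{1/T}} a(n)$ over a parameter of length comparable to $h$ (equivalently, averaging the partial sum function $S(x) = \sum_{n\le x} a(n)$ against a suitable kernel). The crucial observation is that since $h/x \le 1/T$ throughout the relevant range, an additive shift by $h$ is captured, up to negligible boundary effects, by multiplicative shifts on the scale $1/T$, so we obtain an identity of the shape
\[
\sum_{x<n\le x+h}a(n) \;=\; \text{weighted average over $y$ of } \sum_{ye^{-1/T}<n\le ye^{1/T}}a(n) \;+\; (\text{controlled error}).
\]

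Squaring this identity and applying Cauchy--Schwarz bounds $|\sum_{x<n\le x+h} a(n)|^2$ by an average of $|F_T(y)|^2$ where $F_T(y) = \sum_{ye^{-1/T}<n\le ye^{1/T}} a(n)$. Integrating in $x$ and converting $dy$ to $dy/y$ costs a factor $\asymp c_2 X$ (here the ratio $c_2^2/c_1$ in the statement accounts for the variation of $1/y$ over the support of $F_T$, which lies in $[c_1 X e^{-1/T}, c_2 X e^{1/T}]$). Lemma \ref{lem1} then converts $\int |F_T(y)|^2 dy/y$ to $(2/\pi)\int |A(y)|^2 (\sin(y/T)/y)^2 dy$, and bounding $(\sin(y/T)/y)^2 \le \min(1/T^2, 1/y^2) = \min(h^2/(c_1 X)^2, 1/y^2)$ yields the claimed inequality.

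The main obstacle is the first step: carrying out the Saffari--Vaughan averaging so that one obtains an honest pointwise (or at least $L^2$) comparison between the additive sum at $x$ and an average of multiplicative sums at nearby points, with the right normalization and genuinely negligible error. Once this transference is in hand, the remaining steps --- Cauchy--Schwarz, change of measure, and direct invocation of Lemma \ref{lem1} --- are routine.
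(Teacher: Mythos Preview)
Your overall plan---Saffari--Vaughan to pass from additive to multiplicative intervals, then invoke Lemma~\ref{lem1} and bound $(\sin(y/T)/y)^2$ by $\min(T^{-2},y^{-2})$---is exactly the paper's plan. The final two steps (change of measure, Lemma~\ref{lem1}, pointwise bound on the kernel) are as you describe.

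However, your execution of the first step is where you diverge from the paper, and as you yourself note, this is where the real content lies. You propose fixing $T=c_1X/h$ and seeking a \emph{pointwise} identity of the form ``additive sum $=$ weighted average of multiplicative sums $+$ controlled error''; but you do not produce this identity, and it is not obvious how to make the ``controlled error'' genuinely negligible without further work. The paper sidesteps this difficulty entirely by working at the $L^2$ level from the start. With $\mathcal A(x)=\sum_{n\le x}a(n)$, one uses the elementary inequality
\[
|\mathcal A(x+h)-\mathcal A(x)|^2 \le 2\bigl(|\mathcal A(x+\nu)-\mathcal A(x)|^2+|\mathcal A(x+\nu)-\mathcal A(x+h)|^2\bigr)
\]
for any $\nu\in[2h,3h]$, integrates in $x$, and then \emph{averages over} $\nu\in[2h,3h]$. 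The decisive move is the substitution $\nu=\delta x$ in the inner integral: this converts the additive increment $\mathcal A(x+\nu)-\mathcal A(x)$ directly into a multiplicative one $\mathcal A(x(1+\delta))-\mathcal A(x)$, with no error term whatsoever. One then takes a supremum over $\delta$ in the resulting range (roughly $[h/(c_2X),6h/(c_1X)]$) and applies Lemma~\ref{lem1} with $T=2/\log(1+\delta)$ for each such $\delta$. So $T$ is not fixed in advance; it emerges from the argument, and every $T$ in the relevant range satisfies $1/T\asymp h/X$, which is what produces the $\min(h^2/(c_1X)^2,y^{-2})$ factor.

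In short: your outline is correct, but the missing ingredient is precisely the $L^2$ triangle inequality plus the substitution $\nu=\delta x$, which together replace your unproven pointwise identity.
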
  

\begin{proof}   Temporarily define ${\mathcal A}(x) = \sum_{n\le x} a(n)$.  Note that for any $\nu \in [2h,3h]$ 
$$ 
\int_{0}^{\infty} |{\mathcal A}(x+h) -{\mathcal A}(x)|^2 dx \le 2 \int_0^{\infty} (|{\mathcal A}(x+\nu) - {\mathcal A}(x)|^2 + |{\mathcal A}(x+h)-{\mathcal A}(x+\nu)|^2) dx. 
$$ 
Integrate this over all $2h \le \nu \le 3h$, obtaining that $h$ times the left side above is 
\begin{align}
\label{2.21} 
&\ll \int_{2h}^{3h} \int_0^{\infty} |{\mathcal A}(x+\nu ) -{\mathcal A}(x)|^2 dx \ d\nu + \int_{2h}^{3h} \int_0^{\infty} |{\mathcal A}(x+\nu-h) -{\mathcal A}(x)|^2 dx  \ d\nu \nonumber\\ 
&\ll \int_{c_1X/2}^{c_2X} \int_{h}^{3h} |{\mathcal A}(x+\nu) - {\mathcal A}(x)|^2 d\nu \ dx .  
\end{align}
Now in the inner integral over $\nu$ we substitute $\nu =\delta x$, so that $\delta$ lies between $h/(c_2X)$ and $6 h/(c_1X)$.   
 It follows that the quantity in \eqref{2.21} is 
 \begin{align*} 
& \ll \int_{c_1X/2}^{c_2X}  \int_{h/(c_2X)}^{6h/(c_1X)} |{\mathcal A}(x(1+\delta)) -{\mathcal A}(x)|^2 x d\delta \ dx \\
&=  \int_{h/(c_2X)}^{6h/(c_1X)}  \int_{c_1X/2}^{c_2X} |{\mathcal A}(x(1+\delta)) -{\mathcal A}(x)|^2 x dx \ d\delta \\  
&\ll \frac{c_2^2 hX}{c_1} \max_{h/(c_2 X) \le \delta \le  6h/(c_1 X)} \int_{c_1X/2}^{c_2X} |{\mathcal A}(x(1+\delta)) -{\mathcal A}(x)|^2 \frac{dx}{x},
\end{align*} 
and now, appealing to Lemma \ref{lem1} (with $T=2/\log (1+\delta)$ and noting that $(\sin(y/T)/y)^2 \ll \min(1/T^2,1/y^2)$), the stated result follows.  
\end{proof}

\subsection{The Vinogradov-Korobov zero-free region} 

The Vinogradov-Korobov zero-free region establishes that $\zeta(\sigma+it) \neq 0$ in the 
region 
$$ 
\sigma \ge  1 - C (\log (3+|t|))^{-2/3} (\log \log (3+|t|))^{-1/3},
$$ 
for a suitable positive constant $C$.   Moreover, one can obtain good bounds for $1/\zeta(s)$ in this 
region; see Theorem 8.29 of \cite{IK}.  

\begin{lemm} \label{lem3}  For any $\delta >0$, uniformly in $t$ we have 
$$ 
\sum_{n\le x} \lambda(n) n^{it} \ll x \exp\Big( - \frac{\log  x}{(\log (x+|t|))^{\frac 23 +\delta}} \Big), 
$$ 
and 
$$ 
\sum_{p\le x} p^{it} \ll \frac{\pi(x)}{1+|t|}  + x  \exp\Big( - \frac{\log  x}{(\log (x+|t|))^{\frac 23 +\delta}} \Big).
$$
\end{lemm}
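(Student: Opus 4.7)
The plan is to reduce each sum to a contour integral via Perron's formula, shift the contour into the Vinogradov--Korobov zero-free region, and estimate the integrand using the cited bounds for $1/\zeta$.

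For the first estimate, I would start from the Dirichlet series identity $\sum_{n\ge 1}\lambda(n)/n^s = \zeta(2s)/\zeta(s)$, which gives $\sum_n \lambda(n) n^{it} n^{-s} = \zeta(2(s-it))/\zeta(s-it)$. A truncated Perron formula then expresses $\sum_{n\le x}\lambda(n)n^{it}$ as
$$ \frac{1}{2\pi i}\int_{c-iT}^{c+iT} \frac{\zeta(2(s-it))}{\zeta(s-it)}\, \frac{x^{s}}{s}\, ds \;+\; (\text{tail error})$$
with $c = 1 + 1/\log x$ and $T$ a parameter to be chosen polynomially in $x+|t|$. Shifting the contour to the vertical line $\mathrm{Re}(s) = 1 - \eta$ with $\eta = c_0/(\log(x+|t|))^{2/3+\delta/2}$, one encounters no residue: the only potential pole inside the shifted region would come from $\zeta(s-it) = 0$, which is ruled out by the Vinogradov--Korobov region, while the pole of $\zeta(2(s-it))$ lies at $s = it + 1/2$, far to the left, and $\zeta(s-it)$'s pole at $s=1+it$ produces a zero of $1/\zeta$. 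Bounding the integrand on the shifted line using $|1/\zeta(\sigma+i\tau)| \ll (\log(3+|\tau|))^{O(1)}$ in the zero-free region (Theorem 8.29 of \cite{IK}), together with $|\zeta(2(s-it))| \ll 1$ on that line, and using the $1/|s|$ decay plus the horizontal segments to control $T$, yields a bound of the shape $x^{1-\eta}(\log x)^{O(1)}$, which absorbs into the stated exponential.

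For the second estimate, I would first treat $\sum_{n\le x}\Lambda(n) n^{it}$ by the same Perron + contour-shift argument applied to $-\zeta'(s-it)/\zeta(s-it)$. This time the pole of $\zeta(s-it)$ at $s = 1+it$ contributes a genuine residue $x^{1+it}/(1+it)$, of absolute value $\ll x/(1+|t|)$; the shifted integral contributes $O(x^{1-\eta}(\log x)^{O(1)})$ again, using the Vinogradov--Korobov bound for $\zeta'/\zeta$ on the zero-free region. To pass from $\Lambda$ to primes I remove prime powers $p^k$ with $k\ge 2$ (total contribution $O(\sqrt{x}\log x)$) and apply partial summation to
$$ \sum_{p\le x} p^{it} = \sum_{n\le x}\frac{\Lambda(n) n^{it}}{\log n} + O(\sqrt{x}),$$
using the estimate just proved for $\sum_{n\le u}\Lambda(n)n^{it}$ as input. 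The main term $\int_2^{x}\frac{u^{it}}{(1+it)\log u}du \ll \mathrm{li}(x)/(1+|t|) \ll \pi(x)/(1+|t|)$ gives the first term on the right, and the remaining error inherits the exponential savings.

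The main obstacle is the contour-shift step: one must choose $T$ and $\eta$ so that both the Perron truncation error and the horizontal connecting segments are dominated by $x \exp(-\log x/(\log(x+|t|))^{2/3+\delta})$. This forces a careful balancing, and is where the precise form of the Vinogradov--Korobov region, together with the polynomial-in-$\log$ bound for $1/\zeta$ on its boundary, is essential. Everything else is standard bookkeeping.
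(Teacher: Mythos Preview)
Your proposal is correct and follows essentially the same route as the paper: Perron's formula with $c=1+1/\log x$, a contour shift to $\mathrm{Re}(s)=1-(\log(x+|t|))^{-2/3-\delta}$ inside the Vinogradov--Korobov region, and the bounds for $1/\zeta$ from Theorem~8.29 of \cite{IK}. Your treatment of the second estimate via $-\zeta'/\zeta$, the residue at $s=1+it$, and partial summation from $\Lambda$ to primes is exactly the standard unpacking of the paper's ``the second is similar.''
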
  
\begin{proof}  Perron's formula shows that, with $c=1+1/\log x$,  
$$ 
\sum_{n\le x} \lambda(n)n^{it} = \frac{1}{2\pi i} \int_{c-ix}^{c+ix} \frac{\zeta(2w-2it)}{\zeta(w-it)} \frac{x^w}{w} dw + O(x^{\epsilon}).
$$ 
Move the line of integration to Re$(w) = 1- (\log (x+|t|))^{-2/3-\delta}$, staying within the zero-free region for $\zeta(w-it)$.  Using the bounds in Theorem 8.29 of \cite{IK}, the first statement of the lemma follows.  
The second is similar. 
\end{proof} 

For reference, let us note that the Riemann hypothesis gives uniformly 
\begin{equation} 
\label{2.2} 
\sum_{p\le x} p^{it} \ll \frac{\pi(x)}{1+|t|}  + x^{1/2} \log (x+|t|).  
\end{equation}

\subsection{Mean values of Dirichlet polynomials} 

\begin{lemm} \label{lem4}  For any complex numbers $a(n)$ we have 
$$ 
\int_{-T}^{T} \Big| \sum_{n\le N} a(n) n^{it} \Big|^2 dt \ll (T+N) \sum_{n\le N} |a(n)|^2. 
$$ 
\end{lemm}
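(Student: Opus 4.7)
Writing $F(t) = \sum_{n\le N} a(n) n^{it}$, the plan is to smooth the sharp cutoff $\mathbf{1}_{[-T,T]}$ by a non-negative majorant whose Fourier transform has compact support, then expand the square and count lattice points in a small logarithmic window.

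First I would fix, once and for all, a non-negative Schwartz function $\phi$ on $\mathbb{R}$ with $\phi(t) \ge 1$ for $|t|\le 1$ and such that $\hat\phi$ is supported in $[-1,1]$. Such a $\phi$ exists: for example one can take $\phi = |\psi|^{2}$ where $\psi$ is a real-valued bump with $\hat\psi$ supported in $[-1/2, 1/2]$, or invoke the Beurling--Selberg majorants. Rescaling, set $\Phi(t) = \phi(t/T)$, so that $\Phi(t) \ge \mathbf{1}_{[-T,T]}(t)$ and $\hat\Phi(\xi) = T\hat\phi(T\xi)$ is supported in $[-1/T, 1/T]$ with $\|\hat\Phi\|_{\infty} \ll T$.

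Next I would majorize and expand:
$$
\int_{-T}^{T} |F(t)|^{2}\, dt \;\le\; \int_{-\infty}^{\infty} |F(t)|^{2}\, \Phi(t)\, dt \;=\; \sum_{m,n \le N} a(m)\overline{a(n)}\, \hat\Phi\bigl(-\log(n/m)\bigr),
$$
after interchanging the integral with the finite sum. Because $\hat\Phi$ is supported in $[-1/T, 1/T]$, only pairs $(m,n)$ with $|\log(n/m)| \le 1/T$, equivalently $m \in [n e^{-1/T}, n e^{1/T}]$, contribute. For each fixed $n\le N$, the number of admissible integers $m$ is at most $1 + n(e^{1/T} - e^{-1/T}) \ll 1 + n/T$.

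Finally, applying $|a(m)\overline{a(n)}| \le (|a(m)|^{2} + |a(n)|^{2})/2$ together with the uniform bound $|\hat\Phi| \ll T$, the double sum is dominated by
$$
T \sum_{n \le N} |a(n)|^{2} \bigl(1 + n/T\bigr) \;\ll\; (T+N) \sum_{n \le N} |a(n)|^{2},
$$
which is the claimed estimate. There is no real obstacle in this argument: it is a routine Plancherel/majorant computation. The only point requiring a moment's care is the existence of the non-negative Fejér-type majorant $\phi$ with compactly supported Fourier transform, which is classical. One could equivalently derive the result by expanding $\int_{-T}^{T}(n/m)^{it}\,dt$ directly and invoking the Montgomery--Vaughan Hilbert inequality, but the smooth majorant keeps the book-keeping cleaner.
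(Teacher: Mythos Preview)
Your argument is correct. The paper itself does not spell out a proof of this lemma: it simply remarks that the estimate ``can be readily derived from the Plancherel bound Lemma~\ref{lem1}, or see Theorem~9.1 of \cite{IK}.'' Your smooth-majorant approach is exactly the standard proof (and is essentially what one finds in \cite{IK}); deriving it instead from Lemma~\ref{lem1} amounts to the same computation with the specific Fej\'er-type weight $(\sin(y/T)/y)^2$ in place of your general $\hat\Phi$, so the two routes coincide in substance.
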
 
\begin{proof}  This mean value theorem for Dirichlet polynomials can be readily 
derived from the Plancherel bound Lemma \ref{lem1}, or see Theorem 9.1 of \cite{IK}. 
\end{proof} 

We shall draw upon Lemma \ref{lem4} many times; one important way in which it 
is useful is to bound the measure of the set on which a Dirichlet polynomial over the primes can be 
large.  

\begin{lemm} \label{lem5}  Let $T$ be large, and $2\le P\le T$.  
Let $a(p)$ be any sequence of complex numbers, defined on primes 
$p$, with $|a(p)| \le 1$.  Let $V \ge 3$ be a real number and let ${\mathcal E}$ denote the set of values $|t|\le T$ such that 
$|\sum_{p\le P} a(p) p^{it}| \ge \pi(P)/V$.  Then  
$$ 
|{\mathcal E}| \ll  (V^2 \log T)^{1+ (\log T)/(\log P)}. 
$$ 
\end{lemm}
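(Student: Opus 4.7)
The strategy is a high-moment argument matched to the length $T$. Take $k = \lceil (\log T)/(\log P)\rceil$, so that $P^k \ge T$ and $k \le 1+(\log T)/(\log P)$. Writing
\[
F(t) = \sum_{p\le P} a(p) p^{it},
\]
consider the $k$th power as a Dirichlet polynomial of length $P^k$:
\[
F(t)^k = \sum_{n\le P^k} c_k(n)\, n^{it}, \qquad c_k(n) = \sum_{\substack{p_1,\ldots,p_k\le P \\ p_1\cdots p_k = n}} a(p_1)\cdots a(p_k).
\]
On $\mathcal{E}$ we have $|F(t)|^{2k} \ge (\pi(P)/V)^{2k}$, so applying Lemma \ref{lem4} to $F(t)^k$ yields
\[
|\mathcal{E}|\Big(\frac{\pi(P)}{V}\Big)^{2k} \le \int_{-T}^{T} |F(t)^k|^2\,dt \ll (T+P^k) \sum_n |c_k(n)|^2 \ll P^k \sum_n |c_k(n)|^2,
\]
where the final step uses $P^k \ge T$.

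The key arithmetic input is an $L^2$-bound on the coefficients. Expanding $|c_k(n)|^2$ and summing over $n$ gives, using $|a(p)|\le 1$, a count of pairs of ordered $k$-tuples of primes $\le P$ having the same product; by unique factorization the second tuple must be a permutation of the first, so there are at most $k!\,\pi(P)^k$ such pairs. Inserting $\sum_n |c_k(n)|^2 \le k!\,\pi(P)^k$ and using $\pi(P) \gg P/\log P$ together with $k! \le k^k$ yields
\[
|\mathcal{E}| \ll \frac{P^k\, k!\, V^{2k}}{\pi(P)^k} \ll k!\, V^{2k} (\log P)^k \le \bigl(kV^2\log P\bigr)^k.
\]
Since $k\log P \le \log T + \log P \le 2\log T$ and $V^2 \log T \ge 1$, this last quantity is $\ll (V^2\log T)^k \le (V^2 \log T)^{1 + (\log T)/(\log P)}$, as required.

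The step that requires thought, as opposed to mere calculation, is the combinatorial identity $\sum_n|c_k(n)|^2 \le k!\,\pi(P)^k$: it exploits unique factorization and is the one place where the support of $F$ on primes (rather than arbitrary integers) plays a decisive role. The only real obstacle is bookkeeping: choosing $k$ so that $P^k$ just exceeds $T$ makes the $T$-term in Lemma \ref{lem4} harmless and creates the clean cancellation $P^k/\pi(P)^k \asymp (\log P)^k$, after which one reads off the stated bound from $k\log P \asymp \log T$ and $k \le 1 + (\log T)/(\log P)$.
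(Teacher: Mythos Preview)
Your argument is correct and follows the same route as the paper's proof: choose $k=\lceil (\log T)/(\log P)\rceil$, apply Lemma~\ref{lem4} to $F(t)^k$, bound $\sum_n |c_k(n)|^2 \le k!\,\pi(P)^k$, and finish with the prime number theorem and Stirling. The only cosmetic difference is that the paper obtains $\sum_n |c_k(n)|^2 \le k!\,\pi(P)^k$ via the pointwise bound $|c_k(n)|\le k!$ together with $\sum_n |c_k(n)| \le \pi(P)^k$, whereas you count ordered pairs of $k$-tuples with the same product directly; both arguments yield the identical inequality.
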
  
\begin{proof} Let $k= \lceil (\log T)/(\log P) \rceil$ so that $P^k \ge T$.   Write 
$$ 
\Big( \sum_{p\le P} a(p)p^{it}\Big)^k = \sum_{n\le P^{k}} a_k(n) n^{it}. 
$$ 
Note that $|a_k(n)| \le k!$ and that 
$$ 
\sum_{n\le P^k} |a_k(n)| \le \Big( \sum_{p\le P} |a(p)| \Big)^k \le \pi(P)^k. 
$$ 
Therefore, using Lemma \ref{lem4}, we obtain 
$$ 
|{\mathcal E}| \Big( \frac{\pi(P)}{V}\Big)^{2k} \le \int_{-T}^{T} \Big| \sum_{p\le P} a(p)p^{it} \Big|^{2k} dt  
\ll (T+P^k) \sum_{n\le P^k} |a_k(n)|^2 \ll  k! P^k\pi(P)^{k}. 
$$ 
The lemma follows   from the prime number theorem and Stirling's formula.  
\end{proof} 

\subsection{The Hal{\' a}sz-Montgomery bound} 

The mean value theorem of Lemma \ref{lem4}  gives a satisfactory bound when averaging 
over all $|t|\le T$.  We shall encounter averages of Dirichlet polynomials restricted to certain 
small exceptional sets of values $t\in [-T,T]$.  In such situations, an idea going back to Hal{\' a}sz and 
Montgomery, developed in connection with zero-density results, is extremely useful (see Theorem 7.8 of 
\cite{Mon}, or Theorem 9.6 of \cite{IK}).

\begin{lemm}\label{lem6} Let $T$ be large, and ${\mathcal E}$ be a measurable subset of $[-T,T]$.  Then  for any 
complex numbers $a(n)$ 
$$ 
\int_{\mathcal E} \Big| \sum_{n\le N} a(n) n^{it} \Big|^2 dt \ll (N+ |{\mathcal E}|T^{\frac 12} \log T ) \sum_{n\le N} |a(n)|^2. 
$$
\end{lemm}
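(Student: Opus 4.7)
The plan is to use the classical Halász--Montgomery duality argument. Write $S(t) = \sum_{n\le N} a(n) n^{it}$ and set $I := \int_{\mathcal E} |S(t)|^2 dt$. Opening the square and pulling the $n$-sum out front,
\begin{equation*}
I = \sum_{n\le N} a(n) \int_{\mathcal E} \overline{S(t)}\, n^{it}\, dt,
\end{equation*}
and applying Cauchy--Schwarz in $n$ gives $I^2 \le (\sum_{n} |a(n)|^2) \cdot D$, where
\begin{equation*}
D := \sum_{n\le N} \Big| \int_{\mathcal E} \overline{S(t)}\, n^{it}\, dt \Big|^2 = \int_{\mathcal E}\!\int_{\mathcal E} \overline{S(t)} S(u)\, K(u-t)\, du\, dt,
\end{equation*}
with kernel $K(x) := \sum_{n\le N} n^{ix}$. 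It therefore suffices to show that $D \ll (N + |\mathcal E|\, T^{1/2} \log T)\, I$, for then $I \le \|a\|_2 \sqrt{D}$ gives the claimed bound after dividing through by $\sqrt{I}$.

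Next I would estimate the kernel $K(x)$ in two ways. Trivially $|K(x)| \le N$, which is the relevant bound for $|x|$ bounded. For $|x| \ge 1$, one views $K(x)$ as an exponential sum with phase $\phi(n) = (x/2\pi)\log n$ whose second derivative satisfies $|\phi''(n)| \asymp |x|/n^2$. A dyadic decomposition of $[1,N]$ combined with the van der Corput second-derivative test (see e.g.\ Titchmarsh, or equivalently the approximate functional equation for $\zeta(-ix)$) then yields $|K(x)| \ll \sqrt{|x|}\,\log(2+|x|)$, uniformly for $1 \le |x| \le N^2$. Since $|u-t| \le 2T$ throughout, this gives $|K(u-t)| \ll \sqrt{T}\log T$ in the relevant range.

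Finally I split the double integral in $D$ according to whether $|u-t| \le 1$ or $|u-t| > 1$. For the near-diagonal part, the trivial bound $|K| \le N$ together with $|S(t)S(u)| \le \tfrac12(|S(t)|^2 + |S(u)|^2)$ gives a contribution $\ll N\, I$, since for each $t$ only an $O(1)$-measure of $u \in \mathcal E$ contributes. For the off-diagonal part, the bound $|K(u-t)| \ll \sqrt{T}\log T$ and Cauchy--Schwarz in the form $\int_{\mathcal E} |S| \le \sqrt{|\mathcal E|\,I}$ give a contribution $\ll |\mathcal E|\sqrt{T}(\log T)\, I$. Combining these establishes $D \ll (N + |\mathcal E|\,T^{1/2}\log T)\, I$, which is what we needed.

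The main obstacle is the kernel estimate $|K(x)| \ll \sqrt{|x|}\log|x|$ for $|x| \ge 1$, since this is where the genuine oscillatory cancellation of $\sum_{n\le N} n^{ix}$ must be extracted. Everything else is essentially a routine application of duality and Cauchy--Schwarz; the $\sqrt{T}$ in the conclusion is a direct reflection of the square-root saving in this van der Corput bound, and the $\log T$ is the cost of the dyadic summation needed to handle the varying size of $\phi''$ across the range of $n$.
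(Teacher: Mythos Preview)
Your overall strategy---Hal\'asz--Montgomery duality, Cauchy--Schwarz, then a bilinear estimate with kernel $K(x)=\sum_{n\le N}n^{ix}$---is exactly the paper's. The one genuine gap is your claimed kernel bound: it is \emph{not} true that $K(x)\ll\sqrt{|x|}\log(2+|x|)$ uniformly for $1\le |x|\le N^2$. Van der Corput on a dyadic block $[M,2M]$ gives $\ll\sqrt{|x|}+M/\sqrt{|x|}$, and summing over blocks up to $N$ leaves a term $N/\sqrt{|x|}$ (or, via partial summation, the sharper $N/(1+|x|)$ coming from the main term $N^{1+ix}/(1+ix)$). For instance at $x=2$ the sum is of size $\asymp N$, not $O(1)$. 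With the correct bound $|K(x)|\ll N/(1+|x|)+(1+|x|)^{1/2}\log(2+|x|)$, your off-diagonal integral picks up
\[
\int_{\mathcal E}|S(t)|^2\int_{1<|u-t|\le 2T}\frac{N}{|u-t|}\,du\,dt \ \ll\ N(\log T)\,I,
\]
so your argument as written yields only $(N\log T+|\mathcal E|T^{1/2}\log T)\sum|a(n)|^2$.

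The paper removes this spurious $\log T$ by a standard smoothing trick: after Cauchy--Schwarz it extends the sum over $n$ by positivity to $\sum_{n\le 2N}(2-n/N)|\int_{\mathcal E}A(t)n^{-it}dt|^2$, so that the relevant kernel becomes the smoothed sum $\sum_{n\le 2N}(2-n/N)n^{it}$. One extra integration by parts then upgrades the near-diagonal decay to $N/(1+|t|^2)$, which is integrable over $t\in[-2T,2T]$ with bound $O(N)$ rather than $O(N\log T)$. Everything else in your write-up matches the paper.
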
 
\begin{proof}  Let $I$ denote the integral to be estimated, and let $A(t) = \sum_{n\le N} a(n) n^{it}$.  Then 
$$ 
I = \int_{\mathcal E} \sum_{n\le N} \overline{a(n)} n^{-it} A(t) dt \le \sum_{n\le N} |a(n)| \Big| \int_{\mathcal E} A(t) n^{-it} dt \Big|. 
$$  
Using Cauchy-Schwarz we obtain 
\begin{equation} 
\label{2.61} 
I^2 \le \Big( \sum_{n\le N} |a(n)|^2 \Big) \Big( \sum_{n \le 2N} \Big(2- \frac{n}{N} \Big)  \Big| \int_{\mathcal E} A(t) n^{-it} dt \Big|^2 \Big), 
\end{equation}  
where we have taken advantage of positivity to smooth the sum over $n$ in the second sum a little.  Expanding out the integral, the second 
term in \eqref{2.61} is bounded by 
\begin{equation} 
\label{2.62} 
\int_{t_1, t_2 \in {\mathcal E}} A(t_1) \overline{A(t_2)} \sum_{n\le 2N} \Big( 2 - \frac{n}{N} \Big) n^{i(t_2 - t_1)} dt_1 \ dt_2.
\end{equation} 
Now a simple argument (akin to the P{\' o}lya-Vinogradov inequality)  shows that 
\begin{equation} 
\label{2.63} 
\sum_{n\le 2N} \Big(2-\frac{n}{N} \Big) n^{it} \ll \frac{N}{1+|t|^2} + (1+|t|)^{1/2} \log (2+|t|);  
\end{equation} 
here the smoothing in $n$ allows us to save $1+|t|^2$ in the first term, while the unsmoothed sum would have $N/(1+|t|)$ instead (see the proof of Theorem 7.8 in \cite{Mon}).   Using this, and bounding $|A(t_1)A(t_2)|$ by $|A(t_1)|^2 + |A(t_2)|^2$,  we 
see that the second term in \eqref{2.61} is 
$$ 
\ll \int_{t_1 \in {\mathcal E}} |A(t_1)|^2 \Big( \int_{t_2 \in {\mathcal E} } \Big( \frac{N}{1+|t_1-t_2|^2} + T^{1/2 } \log T \Big) dt_2 \Big) dt_1 
\ll  \big( N + |{\mathcal E}| T^{1/2} \log T \big) I. 
$$ 
Inserting this in \eqref{2.61}, the lemma follows.  
\end{proof}

\section{A first attack on Theorem 1.1}
 
 In this section we  establish Theorem \ref{thm1} in the restricted 
 range $h \ge \exp((\log X)^{3/4})$.   This already includes the range $h>X^{\epsilon}$ 
 for any $\epsilon> 0$, and moreover the proof is simple, depending only on Lemmas 
 \ref{lem2}, \ref{lem3} and \ref{lem4}.   Since a large interval may be broken down into 
 several smaller intervals, we may assume that $h\le \sqrt{X}$.  
 
 
 Let ${\mathcal P}$ denote the set of primes in the interval from $\exp((\log h)^{9/10})$ to $h$.  Let us further 
 partition the primes in ${\mathcal P}$ into dyadic intervals.  Thus, let ${\mathcal P}_{j}$ 
 denote the primes in ${\mathcal P}$ lying  between $P_{j}= 2^j \exp((\log h)^{9/10})$ and $P_{j+1} =2^{j+1} \exp((\log h)^{9/10})$.  
 Here $j$ runs from $0$ to $J=\lfloor  (\log h - (\log h)^{9/10})/\log 2\rfloor$.  This choice of ${\mathcal P}$ was made with 
 two requirements in mind: all elements in ${\mathcal P}$ are below $h$, and all are larger than $\exp((\log h)^{9/10})$ 
 which is larger than $\exp((\log X)^{27/40})$, and note that $27/40$ is a little larger than $2/3$ (anticipating an application of 
 Lemma \ref{lem3}).  
 

 Now define a sequence $a(n)$ by setting 
 \begin{equation} 
 \label{3.1} 
 A(y) = \sum_{n} a(n) n^{iy} = \sum_{j} \sum_{p \in {\mathcal P}_{j}} \sum_{X/P_{j+1} \le m \le 2X/P_j } \lambda(p) p^{iy} \lambda(m) m^{iy}.  
 \end{equation}  
 In other words, $a(n) = 0$ unless $X/2 \le n \le 4X$, and  in the range $X\le n \le 2X$ we have 
 \begin{equation}  
 \label{3.2} 
 a(n) = \lambda(n) \omega_{\mathcal P}(n), \qquad \text{where} \qquad \omega_{\mathcal P}(n) =  \sum_{\substack{p \in {\mathcal P} \\ p|n }} 1. 
 \end{equation}

  Tur{\' a}n's proof of the Hardy-Ramanujan theorem can easily be adapted to show that 
 for $n\in [X,2X]$ the quantity $\omega_{\mathcal P}(n)$ is usually close to its average 
 which is about $W({\mathcal P}) = \sum_{p\in {\mathcal P}} 1/p \sim (1/10) \log \log h$.   Precisely, 
 \begin{equation} 
 \label{3.3} 
 \sum_{X\le n\le 2X} \big(\omega_{\mathcal P}(n) - W({\mathcal P}) \big)^2 \ll X W({\mathcal P}) \ll X \log \log h. 
 \end{equation} 
 Moreover, note that for all $X/2 \le n \le 4X$ one has $|a(n)| \le \omega_{\mathcal P}(n)$ and so 
 \begin{equation} 
 \label{3.4} 
 \sum_{n} |a(n)|^2 \ll X W({\mathcal P})^2. 
 \end{equation}

Now 
\begin{align*} 
W({\mathcal P})^2  \int_X^{2X} \Big( \sum_{x< n\le x+h} \lambda(n)\Big)^2 dx 
&\ll \int_{X}^{2X}  \Big( \sum_{x< n< x+h} \lambda(n) \omega_{\mathcal P}(n) \Big)^2  dx \\ 
&+ 
\int_{X}^{2X} \Big(\sum_{x<n \le x+h}  \lambda(n) (\omega_{\mathcal P}(n) - W({\mathcal P}) ) \Big)^2  dx,    
\end{align*}
and the Cauchy-Schwarz inequality and \eqref{3.3} show that the second term above is 
$$ 
\ll \int_X^{2X} h \sum_{x<n \le x+h} (\omega_{\mathcal P}(n) - W({\mathcal P}))^2 dx \ll Xh^2 W({\mathcal P} ).
$$ 
Combining this with Lemma \ref{lem2} we conclude that 
\begin{equation} 
\label{3.5} 
\int_X^{2X} \Big( \sum_{x< n\le x+h} \lambda(n)\Big)^2 dx 
\ll \frac{X}{W({\mathcal P})^2} \int_{-\infty}^{\infty} |A(y)|^2 \min \Big( \frac{h^2}{X^2}, 
\frac{1}{y^2} \Big) dy + \frac{Xh^2}{W({\mathcal P})}. 
\end{equation}

It remains now to estimate the integral in \eqref{3.5}.  First we dispense with the $|y| \ge X$ contribution to the integral, which will be negligible.  Indeed 
by splitting into dyadic ranges $2^k X \le |y| \le 2^{k+1}X$ and using Lemma \ref{lem4} we obtain 
\begin{equation} 
\label{3.6} 
\int_{|y|> X }|A(y)|^2 \frac{dy}{y^2} \ll \frac{1}{X} \sum_{X/2 \le n \le 4X} a(n)^2 \ll W({\mathcal P})^2. 
\end{equation}

Now consider the range $|y|\le X$.  From the definition \eqref{3.1} and Cauchy-Schwarz  
we see that (note $\lambda(p) =-1$) 
\begin{align*}
|A(y)|^2 &\le \Big( \sum_{j=0}^{J} \frac{1}{\log P_j} \Big) \Big(  \sum_{j=0}^{J} \log P_j  \Big| \sum_{p\in {\mathcal P}_j} p^{iy} \Big|^2 \Big| 
\sum_{X/P_{j+1} \le m \le 2 X/P_{j} } \lambda(m) m^{iy} \Big|^2\Big). \\
\end{align*}  
Thus, setting 
\begin{equation}
\label{3.65}  
I_j = (\log P_j)^2 \int_{-X}^{X} \Big| \sum_{p\in {\mathcal P}_j} p^{iy} \Big|^2 \Big |\sum_{X/P_{j+1} \le m \le 2X/P_j} \lambda(m)m^{iy} \Big|^2 \min \Big( \frac{h^2}{X^2}, \frac{1}{y^2}\Big) dy, 
\end{equation}
and noting that $\sum_j 1/ \log P_j \ll W({\mathcal P})$, we obtain 
 %
\begin{align} 
\label{3.7} 
\int_{-X}^{X} |A(y)|^2 \min \Big( \frac{h^2}{X^2}, 
\frac{1}{y^2} \Big) dy 
 \ll W({\mathcal P}) \sum_{j=0}^{J} \frac{1}{\log P_j} I_j \ll W({\mathcal P})^2  \max_{0\le j\le J} I_j. 
\end{align}

To estimate $I_j$, we now invoke Lemma \ref{lem3}.   As noted earlier, our assumption that $h\ge \exp((\log X)^{3/4})$ gives 
 $\log P_j \ge (\log h)^{9/10} \ge (\log X)^{{27}/{40}}$. 
 Thus for $|y|\le X$, Lemma \ref{lem3} shows that 
 $$ 
\sum_{p \in {\mathcal P}_j} p^{iy} \ll \frac{P_j}{\log P_j} \frac{1}{1+|y|} + P_j \exp\big( - (\log X)^{\frac{27}{40}-\frac 23-\delta}\big) \ll 
\frac{P_j}{\log P_j} \Big( \frac{1}{1+|y|} + \frac{1}{\log P_j} \Big),   
$$  
say.  Using this bound for $X\ge |y| \ge \log P_j$, we see that this portion of the integral contributes to $I_j$ an amount 
$$
\ll \frac{P_j^2}{(\log P_j)^2} \int_{\log P_j \le |y|\le X} \Big| \sum_{X/P_{j+1} \le m \le 2X/P_j} \lambda(m) m^{iy}  \Big|^2 \min \Big( \frac{h^2}{X^2}, \frac{1}{y^2} \Big) dy . 
 $$ 
 Split the integral into ranges $|y| \le X/h$, and $2^k X/h \le |y| \le 2^{k+1} X/h$ (for $k=0$, $\ldots$, $\lfloor (\log h)/\log 2\rfloor$) and use Lemma \ref{lem4}.  
Since $X/P_j \gg X/h$, this shows that the quantity above is 
\begin{equation} 
\label{3.8} 
\ll \frac{P_j^2}{(\log P_j)^2} \Big( \frac{h^2}{X^2}  \frac{X^2}{P_j^2}  +\sum_k \frac{h^2}{2^{2k} X^2} \Big( 2^k \frac{X}{h} + \frac{X}{P_j}\Big) \frac{X}{P_j} \Big) \ll  \frac{h^2}{(\log P_j)^2}. 
\end{equation} 
Finally, if $|y| \le \log P_j$, then Lemma \ref{lem3} gives 
\begin{equation} 
\label{3.9} 
\sum_{X/P_{j+1} \le m \le 2X/P_j} \lambda(m) m^{iy} \ll \frac{X}{P_j} (\log X)^{-10},
\end{equation} 
say, so that bounding $\sum_{p\in {\mathcal P}_j} p^{iy}$ trivially by $\ll P_j/(\log P_j)$ we see that this portion of the 
integral contributes to $I_j$ an amount 
\begin{equation} 
\label{3.10}  
\ll (\log P_j)^2 \frac{P_j^2}{(\log P_j)^2} \frac{X^2}{P_j^2} (\log X)^{-20} \frac{h^2}{X^2} (\log P_j) \ll h^2 (\log X)^{-19}. 
\end{equation} 
Combining this with \eqref{3.8}, we obtain that $I_j \ll h^2/(\log P_j)^2$, and so from \eqref{3.7} it 
follows that 
$$ 
\int_{-X}^{X} |A(y)|^2 \min \Big( \frac{h^2}{X^2}, \frac{1}{y^2} \Big) dy \ll W({\mathcal P})^2 \max_j \frac{h^2}{(\log P_j)^2} \ll W({\mathcal P})^2 \frac{h^2}{(\log h)^{9/5}}.
$$ 
Using this and \eqref{3.6} in \eqref{3.5}, we conclude that 
$$ 
\int_X^{2X} \Big( \sum_{x< n\le x+h} \lambda(n)\Big)^2 dx  \ll Xh^2 \Big( \frac{1}{(\log h)^{9/5}} + \frac{1}{W({\mathcal P})} +\frac{1}{h^2}\Big) \ll \frac{Xh^2}{\log \log h}. 
$$ 
This proves Theorem \ref{thm1} in the range $h\ge \exp((\log X)^{ 3/4})$.   

There are two limitations in this argument.  In order to use the mean value theorem (Lemma \ref{lem4}) effectively 
we need to restrict the primes in ${\mathcal P}$ to lie below $h$, so that the Dirichlet polynomial over $m$ has length at least $X/h$.    Secondly, in order to apply Lemma \ref{lem3} to bound the sum over $p\in{\mathcal P}_j$, we are forced to have $P_j > \exp((\log X)^{2/3+ \delta})$ and this motivated our choice of ${\mathcal P}$.    
If we appealed to the Riemann Hypothesis bound \eqref{2.2} instead of Lemma \ref{lem3},  then the second limitation can be relaxed, and the argument presented above would establish Theorem \ref{thm1} in the wider range $h\ge \exp(10 (\log \log X)^{10/9})$.  
In the next section, we shall obtain such  a range unconditionally.  
 
\section{Theorem 1.1 -- Round two}  

We now refine the argument of the previous section, adding another ingredient which will permit 
us to obtain Theorem \ref{thm1}  in the substantially wider region $h \ge \exp( 10(\log \log X)^{10/9})$. 
 Now we shall also need Lemmas \ref{lem5} and \ref{lem6}. 

Let us suppose that $h\le \exp((\log X)^{3/4})$, and let ${\mathcal P}$ and ${\mathcal P}_j$ be as 
in the previous section.  Now we introduce a set of large primes ${\mathcal Q}$ consisting of 
the primes in the interval from $\exp((\log X)^{4/5})$ to $\exp((\log X)^{9/10})$.   As with ${\mathcal P}$, let us also decompose ${\mathcal Q}$ into 
dyadic intervals with ${\mathcal Q}_k$ denoting the primes in ${\mathcal Q}$ lying between $Q_k=2^k \exp((\log X)^{4/5})$ and $Q_{k+1} = 2^{k+1} 
\exp((\log X)^{4/5})$, where $k$ runs from $0$ to $K \sim (\log X)^{9/10}/\log 2$.   

In place of \eqref{3.1} we now define the sequence $a(n)$ by setting 
\begin{equation} 
\label{4.1} 
A(y) = \sum_{n} a(n) n^{iy} = \sum_{j} \Big( \sum_{p \in {\mathcal P}_j} \lambda(p)p^{iy} \Big) A_j(y), 
\end{equation} 
where 
\begin{equation} 
\label{4.11} 
A_j(y) =  \sum_{k} \sum_{\substack{  q \in {\mathcal Q}_k} }   \sum_{X/(P_{j+1}Q_{k+1}) \le m \le 2X/(P_jQ_k)}  
\lambda(q)q^{iy} \lambda(m) m^{iy} . 
\end{equation}
Now $a(n)=0$ unless $X/4\le n \le 8X$, and in the range $X\le n\le 2X$ we have $a(n) = \lambda(n) \omega_{\mathcal P}(n) \omega_{\mathcal Q}(n)$, 
where $\omega_{\mathcal P}(n)$ is as before, and $\omega_{\mathcal Q}$ analogously counts the number of prime factors of $n$ in ${\mathcal Q}$.  

As noted already in \eqref{3.3}, a typical number in $X$ to $2X$ will have $\omega_{\mathcal P}(n) \sim W({\mathcal P})$, and 
similarly will have $\omega_{\mathcal Q}(n) \sim W({\mathcal Q}) = \sum_{q\in {\mathcal Q}} 1/q \sim (1/10) \log \log X$.  Precisely, we 
have 
$$ 
\sum_{X \le n\le 2X} \big(\omega_{\mathcal P}(n) \omega_{\mathcal Q}(n) - W({\mathcal P}) W({\mathcal Q} ) \big)^2 \ll X W({\mathcal P})^2 W({\mathcal Q})^2 \Big( \frac{1}{W({\mathcal P})} + \frac{1}{W({\mathcal Q})}\Big). 
$$ 

Now set (analogously to \eqref{3.65}) 
\begin{equation} 
\label{4.2} 
I_j = (\log P_j)^2  \int_{-X}^{X} \Big| \sum_{p\in {\mathcal P}_j} p^{iy} \Big|^2  |A_j(y)|^2 \min \Big( \frac{h^2}{X^2}, \frac{1}{y^2} \Big) dy. 
\end{equation} 
Then arguing exactly as in \eqref{3.5}, \eqref{3.6}, and \eqref{3.7}, we find that 
\begin{equation} 
\label{4.3} 
\int_X^{2X} \Big( \sum_{x< n\le x+h} \lambda(n)\Big)^2 dx \ll \frac{X}{W({\mathcal Q})^2} \max_{j} I_{j} + \frac{Xh^2}{\log \log h}, 
\end{equation} 
so that our problem has now boiled down to finding a non-trivial estimate for $I_j$.

In the range $|y| \le \log P_j$, we may use a modified version of the bounds in \eqref{3.9} and \eqref{3.10} to see that 
the contribution of this portion of the integral to $I_{j,k}$ is $\ll h^2 (\log X)^{-19}$, which is negligible.  It remains now to bound the integral in \eqref{4.2} in the range $\log P_j \le |y|\le X$.   

Note that we may not be able to use Lemma \ref{lem3} to bound $\sum_{p \in {\mathcal P}_j} p^{iy}$ 
since the range for $p$ might lie below $\exp((\log X)^{ 2/3 +\delta})$.  Define 
\begin{equation} 
\label{4.4} 
{\mathcal E}_j = \Big\{ y: \ \  \log P_j \le |y| \le X,  \ \ \ \Big| \sum_{p \in {\mathcal P}_j} p^{iy} \Big| \ge \frac{P_j}{(\log P_j)^2} \Big \}, 
\end{equation}  
which denotes the exceptional set on which the sum over $p \in {\mathcal P}_j$ does not exhibit much cancelation.  
 To bound the integral in \eqref{4.2} in the range $\log P_j \le |y| \le X$, let us distinguish the cases when $y$ belongs 
to the exceptional set ${\mathcal E}_j$, and when it does not.  Consider the latter case first, where by the definition of ${\mathcal E}_j$ 
the sum over $p \in {\mathcal P}_j$ does have some cancelation.  So this case contributes to \eqref{4.2} 
$$ 
\ll \frac{P_j^2}{(\log P_j)^2}  \int_{-X}^{X} |A_j(y)|^2 \min \Big( \frac{h^2}{X^2}, \frac{1}{y^2} \Big) dy. 
$$
The integral above is the mean value of a Dirichlet polynomial of size about $X/P_j$, which is larger than $X/h$.  Therefore 
applying Lemma \ref{lem4} (as in our estimate \eqref{3.8}) we obtain that the above is 
$$ 
\ll  \frac{P_j^2}{(\log P_j)^2} 
 \frac{h^2}{X^2} \frac{X}{P_j} \sum_{X/(2P_j) \le  n \le 4X/P_j} \Big(\sum_{\substack{ q |n \\ q\in{\mathcal Q}} } 1 \Big)^2 
\ll \frac{h^2}{(\log P_j)^2} W({\mathcal Q})^2. 
$$ 
Thus the contribution of this case to \eqref{4.3} is small as desired.   

Finally we need to bound the contribution of the exceptional values $y \in {\mathcal E}_j$: upon bounding the sum over $p\in {\mathcal P_j}$ 
trivially, this contribution to $I_j$ is 
\begin{equation} 
\label{4.5} 
\ll P_j^2  \int_{{\mathcal E}_j} |A_j(y)|^2 \min \Big( \frac{h^2}{X^2}, \frac{1}{y^2} \Big) dy \ll P_j^2 
\frac{h^2}{X^2} \int_{{\mathcal E}_j} |A_j(y)|^2 dy.
\end{equation}   
Now recall the definition of $A_j(y)$ in \eqref{4.11}, and use Cauchy-Schwarz on the sum over $k$ (as in \eqref{3.65} or \eqref{4.2}) 
to obtain that the quantity in \eqref{4.5} above is 
\begin{equation} 
\label{4.6} 
\ll P_j^2W({\mathcal Q})^2 \frac{h^2}{X^2} \max_k \ (\log Q_k)^2 \int_{{\mathcal E}_j} \Big| \sum_{q \in {\mathcal Q}_k} q^{iy} \Big|^2 \Big| \sum_{X/(P_{j+1}Q_{k+1}) \le m \le 2X/(P_jQ_k)} \lambda(m) m^{iy}  \Big|^2 dy.
\end{equation} 
Since $\log Q_k \ge (\log X)^{4/5}$ (note $4/5$ is bigger than $2/3+\delta$),  in the range 
$X\ge |y| \ge \log P_j$ we can use Lemma \ref{lem3} to obtain 
\begin{equation} 
\label{4.7} 
\sum_{q \in {\mathcal Q}_k} q^{iy} \ll \frac{\pi(Q_{k+1})}{\log P_j} \ll \frac{1}{\log P_j} \frac{Q_k}{\log Q_k}, 
\end{equation}  
which represents a saving of $1/\log P_j$ over 
the trivial bound $Q_k/\log Q_k$.    Using this in \eqref{4.6} and substituting that back in \eqref{4.5}, we see that the contribution of 
the exceptional $y\in {\mathcal E}_j$ to $I_j$ is 
\begin{equation} 
\label{4.8} 
\ll \frac{P_j^2W({\mathcal Q})^2}{(\log P_j)^2} \frac{h^2}{X^2}  \max_{k} Q_k^2  \int_{{\mathcal E}_j}  \Big| \sum_{X/(P_{j+1}Q_{k+1}) \le m \le 2X/(P_jQ_k)} \lambda(m) m^{iy} \Big|^2 dy.
\end{equation} 

It is at this stage that we invoke Lemmas \ref{lem5} and \ref{lem6}.  We are assuming that $\exp(10 (\log \log X)^{10/9}) \le h \le \exp((\log X)^{3/4})$, so that 
$(\log X)^{7} \le P_j \le X^{\epsilon}$.  Appealing to Lemma \ref{lem5}, it follows that $|{\mathcal E}_j| \ll X^{3/7+\epsilon}$.  Using now the bound of 
Lemma \ref{lem6}, 
we conclude that the quantity in \eqref{4.8} is 
\begin{equation} 
\label{4.9} 
\ll \frac{P_j^2W({\mathcal Q})^2}{(\log P_j)^2} \frac{h^2}{X^2} \max_{k} Q_k^2   \Big( \frac{X}{P_jQ_k} + X^{3/7+\epsilon} X^{1/2+\epsilon} \Big) \frac{X}{P_jQ_k} \ll \frac{h^2W({\mathcal Q})^2}{(\log P_j)^2}.   
\end{equation} 
Inserting these estimates back in \eqref{4.3}, we obtain finally that 
$$ 
\int_X^{2X} \Big( \sum_{x < n \le x+h} \lambda(n) \Big)^2 dx \ll \frac{Xh^2}{\log \log h}, 
$$ 
which establishes Theorem \ref{thm1} in this range of $h$.

The limitation in this argument comes from the last step where in applying the Hal{\' a}sz-Montgomery Lemma \ref{lem6} we need the 
measure of the exceptional set ${\mathcal E}_j $ to be a bit smaller than $X^{1/2}$, and to achieve this we needed $P_j$ to be larger 
than a suitable power of $\log X$.  

\section{Once more unto the breach}  

Now we add one more ingredient to the argument developed in the preceding two sections, and this will permit us 
to obtain Theorem \ref{thm1} in the range $h> \exp( (\log \log \log X)^2)$.   
Moreover once this argument is in place, we hope it will be clear that a more 
elaborate iterative argument should lead to Matom{\" a}ki and Radziwi{\l \l}'s result; we shall briefly sketch their argument, where the 
details are arranged differently, in the next section.  Assume below that $h \le \exp(10 (\log \log X)^{10/9})$.   
 
 Let ${\mathcal P}$ and ${\mathcal Q}$ be as in the previous section.   Let ${\mathcal P}^{(1)}$ denote the 
 set of primes lying between $\exp(\exp(\frac{1}{100} (\log h)^{9/10})) $ and $\exp(\exp(\frac{1}{30} (\log h)^{9/10}))$, so that this set is intermediate between ${\mathcal P}$ and ${\mathcal Q}$.   Again split up 
 ${\mathcal P}^{(1)}$ into dyadic blocks, which we shall index as ${\mathcal P}^{(1)}_{j_1}$.   In place of 
 \eqref{4.1} we now define the sequence $a(n)$ by setting 
 \begin{equation} 
 \label{5.1} 
 A(y) =\sum_{n} a(n) n^{iy} =  \sum_{j} \Big(\sum_{p \in {\mathcal P}_j} \lambda(p) p^{iy}\Big) A_j(y), 
 \end{equation} 
 with 
 \begin{equation} 
 \label{5.2} 
 A_j(y) = \sum_{j_1} \Big( \sum_{p_1 \in {\mathcal P}^{(1)}_{j_1}} \lambda(p_1)p_1^{iy} \Big) A_{j,j_1}(y), 
 \end{equation} 
  where, with $M_{j,j_1,k} = X/(P_{j}P^{(1)}_{j_1} Q_{k})$,   
  \begin{equation} 
  \label{5.3} 
  A_{j,j_1} (y) = \sum_{k} \sum_{q\in {\mathcal Q}_k} \sum_{M_{j,j_1,k}/8  \le m  \le 2M_{j,j_1,k}} \lambda(q) q^{iy} \lambda(m)m^{iy} .
  \end{equation} 
  Thus $a(n)$ is zero unless $n$ lies in $[X/8,16X]$ and on $[X,2X]$ we have $a(n) = \lambda(n) \omega_{\mathcal P}(n) \omega_{{\mathcal P}^{(1)}}(n) 
  \omega_{\mathcal Q}(n)$.

Now arguing as in \eqref{4.2} and \eqref{4.3} we obtain 
\begin{equation} 
\label{5.4} 
\int_X^{2X} \Big( \sum_{x < n\le x+h} \lambda(n) \Big)^2 dx \ll \frac{X}{W({\mathcal Q})^2 W({\mathcal P}^{(1)})^2}  \max_j I_j + \frac{Xh^2}{\log \log h}, 
\end{equation} 
where 
\begin{equation} 
\label{5.5}
I_j = (\log P_j)^2 \int_{-X}^{X} \Big| \sum_{p \in {\mathcal P}_j} p^{iy} \Big|^2 |A_j(y)|^2 \min \Big( \frac{h^2}{X^2}, \frac{1}{y^2} \Big) dy. 
\end{equation} 
As before the small portion of the integral with $|y| \le \log P_j$ can be estimated trivially.  Further 
if the sum over $p\in {\mathcal P}_j$ exhibited some cancelation, then the argument of Section 3 applies 
and produces the desired savings (we also saw this in Section 4 when dealing with $y$ not in the exceptional set ${\mathcal E}_j$).  

So now consider the exceptional set ${\mathcal E}_j$ (exactly as in \eqref{4.4}) consisting of $y$ with $\log P_j \le |y| \le X$ and 
$|\sum_{p\in {\mathcal P}_j} p^{iy} | \ge P_j/(\log P_j)^2$, and we must bound the contribution to $I_j$ from $y\in {\mathcal E}_j$.  
As we remarked at the end of Section 4, in the range of $h$ considered here we are 
not able to guarantee that the measure of ${\mathcal E}_j$ is below $X^{1/2-\delta}$, which would have permitted an 
application of Lemma \ref{lem6} (as in Section 4).   Using a Cauchy-Schwarz argument (similar to the ones leading to \eqref{3.65}, or \eqref{4.2}, or \eqref{5.4}),  
we may bound the contribution to $I_j$ from $y \in {\mathcal E}_j$ by 
\begin{equation} 
\label{5.6}
\ll W({\mathcal P}^{(1)})^2 \max_{j_1} (\log P_{j})^2 (\log P^{(1)}_{j_1})^2 I(j,j_1), 
\end{equation} 
say, with 
\begin{equation} 
\label{5.7}
I(j,j_1) = \int_{{\mathcal E}_j} \Big| \sum_{p\in {\mathcal P}_j} p^{iy} \Big|^2 \Big| \sum_{p_1 \in {\mathcal P}^{(1)}_{j_1}} p_1^{iy} \Big|^2 |A_{j,j_1}(y)|^2 \min \Big(\frac{h^2}{X^2},\frac{1}{y^2} \Big) dy.
\end{equation} 

Now ${\mathcal P}^{(1)}$ is a suitably large interval (the lower end point is larger than $(\log X)^{100}$ say), 
so that one can use Lemma \ref{lem5} to show that the measure 
of the set of $y \in [-X,X]$ with  $|\sum_{p_1 \in {\mathcal P}^{(1)}_{j_1}} p_1^{iy} | \ge (P^{(1)}_{j_1})^{9/10}$ is at most $X^{1/3}$.  
For these exceptionally large values of the sum over $p_1$, we bound the sums over $p \in {\mathcal P}_j$ and $p_1\in {\mathcal P}^{(1)}_{j_1}$ 
trivially and argue as in Section 4, \eqref{4.6}--\eqref{4.9}.  This argument shows that the contribution of large values of the sum over $p_1$ to \eqref{5.7} is acceptably small.  

We finally come to the new argument of this section: namely, in dealing with the portion of 
the integral $I(j,j_1)$ where the sum over $p$ is large (since $y\in {\mathcal E}_j$) but the sum over 
$p_1$ exhibits some cancelation.  Bounding the sum over $p_1$ by $\le (P^{(1)}_{j_1})^{9/10}$, we must handle
\begin{equation} 
\label{5.8} 
(P^{(1)}_{j_1})^{9/5}  \int_{{\mathcal E}_j} \Big| \sum_{p \in {\mathcal P}_j} p^{iy} \Big|^2 | A_{j, j_1} (y)|^2 \min \Big( \frac{h^2}{X^2}, \frac{1}{y^2} \Big) dy. 
\end{equation} 
Above we must estimate the mean square of a Dirichlet polynomial of length about $X/P^{(1)}_{j_1}$; the 
set ${\mathcal E}_j$ may not be small enough to use Lemma \ref{lem6} effectively, and the length of 
the Dirichlet polynomial is small compared to $X/h$, so that there is also some loss in using Lemma \ref{lem4}.  
The way out is to bound \eqref{5.8} by 
\begin{equation} 
\label{5.9} (P^{(1)}_{j_1})^{9/5}  \int_{-X}^{X} \Big| \sum_{p \in {\mathcal P}_j} p^{iy} \Big|^{2+2\ell} \Big(\frac{P_j}{(\log P_j)^2}\Big)^{-2\ell} | A_{j, j_1} (y)|^2 \min \Big( \frac{h^2}{X^2}, \frac{1}{y^2} \Big) dy;  
\end{equation} 
here $\ell$ is any natural number, and the inequality holds because on ${\mathcal E}_j$ the sum over $p\in {\mathcal P}_j$ is $\ge P_j/(\log P_j)^2$ by assumption.  We choose $\ell = \lceil (\log P^{(1)}_{j_1})/\log P_j \rceil$. 
 Now in \eqref{5.9}, we must estimate the mean square of the Dirichlet polynomial $(\sum_{p\in {\mathcal P_j}} p^{iy})^{1+\ell} A_{j,j_1}(y)$, and by our choice for $\ell$ this Dirichlet polynomial has length at least $X$, permitting an efficient use of Lemma \ref{lem4}.   With a little effort, Lemma \ref{lem4} can be used to bound \eqref{5.9} by (we have been a little wasteful in some estimates below) 
\begin{align}
\label{5.10} 
&\ll (P^{(1)}_{j_1})^{9/5} \Big(\frac{P_j}{(\log P_j)^2}\Big)^{-2\ell}  \frac{h^2}{X^2} W({\mathcal Q})^2 (\ell+1)! \Big( \frac{(2P_j)^{\ell}X}{P^{(1)}_{j_1}} \Big)^2 \nonumber \\
&\ll W({\mathcal Q})^2 h^2 (P^{(1)}_{j_1})^{-1/5} (\ell \log P_j)^{4\ell} \ll W({\mathcal Q})^2 h^2 (P^{(1)}_{j_1})^{-1/15},  
\end{align}
 where at the last step we used $\log \log P^{(1)}_{j_1} \le (1/30) \log P_j$.  This contribution to 
 \eqref{5.6} is once again acceptably small (having saved a small power of $P^{(1)}_{j_1}$), and 
 completes the proof of Theorem \ref{thm1} in this range of $h$.  
 
 At this stage, all the ingredients in the proof of Theorem \ref{thm1} are at hand, and one 
 can begin to see an iterative argument that would remove even the very weak hypothesis on $h$ made 
 in this section!

\section{Sketch of Matom{\" a}ki and Radziwi{\l \l}'s argument for Theorem 1.1}

In the previous three sections, we have described some of the key ideas developed in 
\cite{MR2}.   The argument given in \cite{MR2} arranges the details differently, in 
order to achieve quantitatively better results: our version saved a modest $\log \log h$ 
over the trivial bound,  and \cite{MR2} saves a small power of $\log h$.   

Instead of considering $a(n)$ being $\lambda(n)$ weighted by the number of primes in 
various intervals (as in Sections 3, 4, 5), Matom{\" a}ki and Radziwi{\l \l} deal with 
$a(n)$ being $\lambda(n)$ when $n$ is restricted to integers with at least one prime factor in 
carefully chosen intervals (and $a(n)=0$ otherwise).  To illustrate, we revisit the argument in Section 3, 
and let ${\mathcal P}$ be the interval defined there.  Let ${\mathcal S}$ denote the set of integers $n \in [1,2X]$ 
with $n$ having at least one prime factor in ${\mathcal P}$.   A simple sieve argument shows that there are 
$\ll X/(\log h)^{1/10}$ numbers $n\in [X,2X]$ that are not in ${\mathcal S}$.  Therefore 
\begin{equation} 
\label{6.1} 
\int_X^{2X}  \Big( \sum_{x< n\le x+h} \lambda(n) \Big)^2 dx \ll 
\int_X^{2X} \Big( \Big(\sum_{\substack{ x< n\le x+h \\ n\in {\mathcal S}} } \lambda(n) \Big)^2 
+ h \sum_{\substack{x< n\le x+h \\ n\notin{\mathcal S}} } 1 \Big) dx, 
\end{equation} 
and the second term is $O(Xh^2/(\log h)^{1/10})$.  Now we use Lemma \ref{lem2} to transform the 
problem of estimating the first sum above to that of bounding the Dirichlet polynomial 
\begin{equation} 
\label{6.2} 
A(y)  = \sum_{\substack{X <n \le 2X \\ n\in{\mathcal S}} } \lambda(n) n^{iy} . 
\end{equation} 
To proceed further, we need to be able to factor the Dirichlet polynomial $A$: this can 
be done by means of the approximate identity 
\begin{equation} 
\label{6.3} 
A(y) \approx \sum_{p \in {\mathcal P}} \sum_{ \substack{m \\ pm \in [X,2X] \\ pm \in {\mathcal S}}} \frac{\lambda(m) m^{iy} }{\omega_{\mathcal P}(m) +1} \lambda(p)p^{iy} . 
\end{equation} 
(The approximate identity above fails to be exact because $n$ might have repeated prime factors from ${\mathcal P}$, but 
this difference is of no importance.)
Now above we can use a standard Fourier analytic technique to separate the variables $m$ and $p$, and 
in this fashion make $p$ and $m$ range over suitable dyadic intervals.  Alternatively one can divide the 
sum over ${\mathcal P}$ into many short intervals, and for each such short interval the corresponding range 
for $m$ may be well approximated by a suitable interval; this is the approach taken in \cite{MR2}.  In either case, 
we obtain a factorization of $A(y)$ very much like what we had in Section 3, and now the argument can follow 
as before.  Note that in the first step \eqref{6.1} we now have a loss of only $O(Xh^2/(\log h)^{1/10})$ which is 
substantially better than our previous argument in \eqref{3.5} where we had the bigger error term $O(Xh^2/ \log \log h)$.

Jumping to the argument in Section 5, we can take ${\mathcal S}$ to be the set of integers $n \in [1,2X]$ 
having at least one prime factor in  each of the intervals ${\mathcal P}$,  ${\mathcal P}^{(1)}$, and ${\mathcal Q}$.  
Once again the sieve shows that there are $\ll X/(\log h)^{1/10}$ integers in $[X,2X]$ that are not in ${\mathcal S}$.  
We start with the expression \eqref{6.3}, and perform a dyadic decomposition of $p\in {\mathcal P}$.   If for each $j$ 
the sum $\sum_{p \in {\mathcal P}_j} p^{iy}$ exhibits cancelation, then using Lemma \ref{lem4} and \eqref{6.3} we 
obtain a suitable bound.   If on the other hand for some $j$ the sum over $p\in {\mathcal P}_j$ is large, then we decompose 
the corresponding Dirichlet polynomial $A_j(y)$ using the primes in ${\mathcal P}^{(1)}$: 
\begin{align} 
\label{6.4} 
A_j(y) &= \sum_{\substack{ m \in [X/P_{j+1}, X/P_j] \\ m \in {\mathcal S}^{(1)} } } \frac{\lambda(m)m^{iy}}{\omega_{\mathcal P}(m) + 1} 
\nonumber\\
&\approx \sum_{p_1 \in {\mathcal P}^{(1)}} \lambda(p_1)  p_1^{iy} \sum_{\substack{ m \\ mp_1 \in [X/P_{j+1}, X/P_j] \\ mp_1 \in {\mathcal S}^{(1)} } } 
\frac{\lambda(m)m^{iy}}{(\omega_{\mathcal P}(m)+1)( \omega_{{\mathcal P}^{(1)}} (m)+1)},
\end{align} 
where ${\mathcal S}^{(1)}$ denotes the integers in $[1,2X]$ with at least one prime factor in ${\mathcal P}^{(1)}$ and one in ${\mathcal Q}$.   
Once again we can split up the primes in ${\mathcal P}^{(1)}$ into dyadic blocks, and separate variables.  If now the sum over $p_1 \in {\mathcal P}_{j_1}^{(1)}$ 
always has some cancelation, then we can argue using an appropriately large moment of the sum over $p\in {\mathcal P}_j$ as in \eqref{5.8}--\eqref{5.10}.  
If for some $j_1$, the sum over $p_1 \in {\mathcal P}_{j_1}^{(1)}$ is large, then we exploit the fact that this set has 
small measure, and argue as in \eqref{4.5}--\eqref{4.9}.  In short the decompositions \eqref{6.3} and \eqref{6.4} give the 
same flexibility as the factorized expressions \eqref{5.1} and \eqref{5.2} that we used in Section 5.

The argument in \cite{MR2} generalizes the approach described in the previous paragraph.  
Matom{\" a}ki and Radziwi{\l \l} define a sequence of increasing ranges of primes, starting with ${\mathcal P}={\mathcal P}^{(0)}$ (as 
in our exposition), and proceeding with ${\mathcal P}^{(1)}$, $\ldots$, ${\mathcal P}^{(L)}$ with the last interval 
getting up to primes of size $\exp(\sqrt{\log X})$, and a final interval ${\mathcal Q}$ (again as in our exposition).   Then 
one restricts to integers having at least one prime factor in each of these intervals.  The corresponding Dirichlet series 
admits many flexible factorizations as in \eqref{6.3} and \eqref{6.4}.  Start with the decomposition \eqref{6.3}, and split into 
dyadic blocks.  If $y$ is such that for all dyadic blocks ${\mathcal P}_j ={\mathcal P}^{(0)}_j$ one has cancelation in $p^{iy}$, 
then Lemma \ref{lem4} leads to a suitable bound.  Otherwise we proceed to a decomposition as in \eqref{6.4}, and 
see whether for every dyadic block in ${\mathcal P}^{(1)}$ the corresponding sum has cancelation.  If that is the case, then a 
moment argument as in \eqref{5.6}--\eqref{5.10} works.  Else, we must have some dyadic block in ${\mathcal P}^{(1)}$ with 
a large contribution, and we now proceed to a decomposition involving ${\mathcal P}^{(2)}$.   Ultimately we arrive at a dyadic 
interval in ${\mathcal P}^{(L)}$ which makes a large contribution, and now we use that this happens very rarely and argue as in 
\eqref{4.5}--\eqref{4.9}.   The structure of the proof may be likened to a ladder -- a large contribution to a dyadic interval in ${\mathcal P}^{(j)}$ is 
used to force a large contribution to a dyadic interval in ${\mathcal P}^{(j+1)}$ -- and one must choose the intervals ${\mathcal P}^{(j)}$ 
so that the rungs of the ladder are neither too close nor too far apart.  Fortunately the method is robust and 
a wide range of choices for ${\mathcal P}^{(j)}$ work.   We end our sketch of the proof of Theorem \ref{thm1} here, referring to 
\cite{MR2} for further details of the proof, and noting that somewhat related iterated decompositions of Dirichlet polynomials arose 
recently in connection with moments of $L$-functions (see \cite{Har}, \cite{RS}).

\section{Generalizations for multiplicative functions}

As mentioned in \eqref{Mult1}, the work of Matom{\" a}ki and Radziwi{\l \l} establishes 
short interval results for general multiplicative functions $f$ with $-1\le f(n) \le 1$ for all $n$.  
Our treatment so far has been specific to the Liouville function; for example we have freely used the bounds 
of Lemma \ref{lem3} which do not apply in the general situation.  In this section we discuss an important 
special class of multiplicative functions (those that are ``unpretentious"), and give a brief 
indication of the changes to the arguments that are needed.   There is one notable extra ingredient that 
we need -- an analogue of the Hal{\' a}sz-Montgomery Lemma for primes (see Lemma \ref{lem7.1} below).  

 A beautiful 
theorem of Hal{\' a}sz \cite{Hal} (extending earlier work of Wirsing) shows that mean values of bounded complex valued 
multiplicative functions $f$ are small unless $f$ pretends to be the function $n^{it}$ for a suitably small value of $t$.  When the 
multiplicative function is real valued, one can show that the mean value is small unless $f$ pretends to be the function $1$: 
this means that $\sum_{p\le x} (1-f(p))/p$ is small.   There is an extensive literature around Hal{\' a}sz's theorem and its consequences; see for example \cite {GS, HT, Mon2, Ten}.    Let us state one such result precisely:  suppose $f$ is a 
completely multiplicative function taking values in the interval $[-1,1]$, and suppose that 
\begin{equation} 
\label{7.1} 
\sum_{p\le X} \frac{1-f(p)}{p} \ge \delta \log \log X
\end{equation} 
for some positive constant $\delta$.  Then  uniformly for all $|t|\le X$ and all $\sqrt{X} \le x \le X^2$ we 
have 
\begin{equation} 
\label{7.2} 
\sum_{n\le x} f(n) n^{it} \ll \frac{x}{(\log x)^{\delta_1}}, 
\end{equation} 
for a suitable constant $\delta_1$ depending only on $\delta$.

Now let us consider the analogue of Theorem \ref{thm1} for such a completely multiplicative 
function $f$, in the simplest setting of short intervals  of length $\sqrt{X} \ge h \ge \exp((\log X)^{17/18})$ 
(a range similar to that considered in Section 3).   In this range we wish to show that 
\begin{equation} 
\label{7.3} 
\int_X^{2X} \Big( \sum_{x < n\le x+h} f(n) \Big)^2 dx = o (Xh^2), 
\end{equation} 
which establishes \eqref{Mult1} for almost all short intervals in this particular situation.  
 
Let ${\mathcal P}$ denote the primes in $\exp((\log h)^{9/10})$ to $h$, as in Section 3, and 
break it up into dyadic blocks ${\mathcal P}_j$ like before.  Analogously to \eqref{3.1}, we 
define the Dirichlet series 
\begin{equation} 
\label{7.4} 
A(y) = \sum_{n} a(n) n^{iy} = \sum_j \sum_{p\in {\mathcal P}_j} \sum_{X/P_{j+1} \le m \le 2X/P_j}  f(p)p^{iy} f(m)m^{iy}, 
\end{equation} 
so that $a(n)$ is zero unless $X/2\le n\le 4X$ and in the range $X\le n\le 2X$ we have $a(n) = f(n) \omega_{\mathcal P}(n)$; 
all exactly as in \eqref{3.2}.  Now arguing as in \eqref{3.3}--\eqref{3.7} we obtain that 
\begin{equation} 
\label{7.5} 
\int_X^{2X} \Big( \sum_{x< n\le x+h} f(n) \Big)^2 dx \ll X \max_j I_j + \frac{Xh^2}{\log \log h},  
\end{equation} 
where 
\begin{equation} 
\label{7.6} 
I_j = (\log P_j)^2 \int_{-X}^{X} \Big| \sum_{p\in {\mathcal P}_j } f(p) p^{iy} \Big|^2 \Big| \sum_{X/P_{j+1} \le m \le 2X/P_j} f(m)m^{iy} \Big|^2 \min 
\Big(\frac{h^2}{X^2},\frac{1}{y^2}\Big) dy. 
\end{equation} 

Since $f$ is essentially arbitrary, we can no longer use Lemma \ref{lem3} to bound the sum over $p$ above.   
The argument splits into two cases depending on whether the sum over $p\in {\mathcal P}_j$ is large or not.  Let 
\begin{equation} 
\label{7.7} 
{\mathcal E}_j = \Big\{ y: \  |y|\le X, \ \ \Big| \sum_{p\in {\mathcal P}_j} f(p)p^{iy} \Big| \ge \frac{P_j}{(\log P_j)^2} \Big\}, 
\end{equation} 
denote the exceptional set on which the sum over $p$ is large.  On the complement of ${\mathcal E}_j$, it is 
simple to estimate the contribution to $I_j$: namely, using Lemma \ref{lem4}, we may bound this contribution by 
$$ 
\ll \frac{P_j^2}{(\log P_j)^2} \int_{-X}^{X} \Big| \sum_{X/P_{j+1} \le m \le 2X/P_j} f(m)m^{iy} \Big|^2 \min 
\Big(\frac{h^2}{X^2},\frac{1}{y^2}\Big) dy \ll \frac{h^2}{(\log P_j)^2}, 
$$ 
which is acceptably small in \eqref{7.5}.

It remains to estimate the contribution to $I_j$ from the exceptional set ${\mathcal E}_j$.  Here we invoke 
the bound \eqref{7.2}, so that the desired contribution is 
\begin{equation} 
\label{7.8} 
\ll \frac{X^2 (\log P_j)^2}{(\log X)^{2\delta_1} P_j^2} \int_{{\mathcal E}_j} \Big| \sum_{p \in {\mathcal P}_j} f(p) p^{iy} \Big|^2 \min\Big(\frac{h^2}{X^2},\frac{1}{y^2} \Big) dy.
\end{equation} 
Since $h \ge \exp((\log X)^{17/18})$ we have $P_j \ge \exp((\log h)^{9/10}) \ge \exp((\log X)^{17/20})$, and an 
application of Lemma \ref{lem5} shows that the measure of ${\mathcal E}_j$ is $\ll \exp((\log X)^{1/6})$.  This 
is extremely small, and it is tempting to use the Hal{\' a}sz-Montgomery Lemma \ref{lem6} to estimate \eqref{7.8}.  
However this gives an estimate too large by a factor of $\log P_j$, since Lemma \ref{lem6} does not take into account that the Dirichlet 
polynomial in \eqref{7.8} is supported only on the primes.   This brings us to the final key ingredient in \cite{MR2} -- 
a version of the Hal{\' a}sz-Montgomery Lemma for prime Dirichlet polynomials.

\begin{lemm} \label{lem7.1}
 Let $T$ be large, and ${\mathcal E}$ be a measurable subset of $[-T,T]$.  Then for any 
complex numbers $x(p)$ and any $\epsilon>0$, 
$$ 
\int_{\mathcal E} \Big| \sum_{p\le P} x(p) p^{it} \Big|^2 dt \ll  \Big(\frac{P}{\log P}  + |{\mathcal E}| P\exp\Big( - \frac{\log P}{(\log (T+P))^{2/3+\epsilon}} \Big) \Big)\sum_{p\le P} |x(p)|^2. 
$$ 
\end{lemm}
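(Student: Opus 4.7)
The plan is to adapt the proof of Lemma \ref{lem6}, replacing the generic smoothing $(2-n/N)$ there by a smooth weight $\phi$ supported over the primes, and using the Vinogradov-Korobov analysis of Lemma \ref{lem3} to obtain a fast decay for the resulting kernel. Set $B(t) = \sum_{p\le P} x(p) p^{it}$, $I = \int_{\mathcal E}|B(t)|^2\, dt$, and fix once for all $\phi(y) := e\cdot e^{-y}$: this is smooth on $[0,\infty)$, of Schwartz type, and satisfies $\phi(y) \ge 1$ for $y\in[0,1]$. Writing $I = \sum_p x(p)\overline{J(p)}$ with $J(p) = \int_{\mathcal E} B(t) p^{-it}\, dt$, a Cauchy--Schwarz with the weight $\phi(p/P)\ge 1$ gives
$$I^2 \le \Big(\sum_{p\le P}\frac{|x(p)|^2}{\phi(p/P)}\Big)\Big(\sum_p \phi(p/P) |J(p)|^2\Big) \ll \Big(\sum_{p\le P}|x(p)|^2\Big)\int_{\mathcal E}\!\!\int_{\mathcal E} B(t_1)\overline{B(t_2)}\, K(t_2-t_1)\, dt_1 dt_2,$$
where $K(u) := \sum_p \phi(p/P)\, p^{iu}$.

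The crux of the argument is the kernel estimate
$$|K(u)| \ll \frac{P/\log P}{(1+|u|)^2} + P\exp\Big(-\frac{\log P}{(\log(P+|u|))^{2/3+\epsilon}}\Big). \qquad (\ast)$$
To prove $(\ast)$, first consider $L(u) := \sum_n \Lambda(n)\phi(n/P) n^{iu}$. By Mellin inversion,
$$L(u) = \frac{1}{2\pi i}\int_{(c)} \hat\phi(s)\, P^s\, \Big(-\frac{\zeta'}{\zeta}(s-iu)\Big)\, ds, \qquad c = 1 + (\log P)^{-1},$$
with $\hat\phi(s) = e\,\Gamma(s)$ decaying exponentially on vertical lines. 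Shift the contour to $\operatorname{Re}(s) = 1 - c_0(\log(P+|u|))^{-2/3-\epsilon}$, staying inside the Vinogradov-Korobov zero-free region. The only singularity crossed is the simple pole of $-\zeta'/\zeta(s-iu)$ at $s = 1+iu$, whose residue contributes $\hat\phi(1+iu)\, P^{1+iu} \ll_A P(1+|u|)^{-A}$ by the Stirling decay of $\Gamma$ on vertical lines. The shifted integral is bounded, using $|\zeta'/\zeta(\sigma+it)| \ll (\log(3+|t|))^{2/3+\epsilon}$ in the zero-free region (Theorem 8.29 of \cite{IK}) and the rapid decay of $\hat\phi$, by $P\exp(-c_0\log P/(\log(P+|u|))^{2/3+\epsilon})$ up to logarithmic factors that may be absorbed into $\epsilon$. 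Hence $L(u) \ll_A P(1+|u|)^{-A} + P\exp(\cdots)$. To pass from $L$ to $K$, decompose $\log p = \log P + \log(p/P)$ and write $L(u) = (\log P)\, K(u) + R(u) + O(\sqrt{P}\log P)$, where $R(u) := \sum_p \phi(p/P)\log(p/P)\, p^{iu}$ admits the same Mellin treatment (with $\hat\phi$ replaced by $\hat\phi'$, which decays just as rapidly), and the error $O(\sqrt{P}\log P)$ collects the higher prime powers in $\Lambda$. Dividing by $\log P$ yields $(\ast)$.

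With $(\ast)$ in hand, use $|B(t_1)B(t_2)| \le \tfrac12(|B(t_1)|^2 + |B(t_2)|^2)$ together with symmetry to obtain
$$\int_{\mathcal E}\!\!\int_{\mathcal E} B(t_1)\overline{B(t_2)} K(t_2-t_1)\, dt_1 dt_2 \le I \cdot \sup_{t_1\in{\mathcal E}}\int_{\mathcal E}|K(t_2-t_1)|\, dt_2.$$
Since $\int_{-\infty}^{\infty}(1+|u|)^{-2}\, du$ is finite and $|t_2-t_1|\le 2T$ on ${\mathcal E}$, the inner integral is bounded by $P/\log P + |{\mathcal E}|\, P\exp(-\log P/(\log(P+T))^{2/3+\epsilon})$. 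Therefore $I^2 \ll (\sum_p|x(p)|^2)\cdot I \cdot (P/\log P + |{\mathcal E}|P\exp(\cdots))$, and dividing by $I$ gives the lemma.

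The main obstacle is $(\ast)$: a direct application of Lemma \ref{lem3} yields only the weaker decay $|K(u)|\ll (P/\log P)/(1+|u|) + \cdots$, which would produce a parasitic $\log T$ on integration and ruin the claimed $P/\log P$ in the statement. Installing the smooth weight $\phi$ and carrying out the Mellin contour shift is essential: it upgrades $1/(1+|u|)$ to arbitrary polynomial decay, and it is at this point that the Vinogradov-Korobov zero-free region enters in a crucial uniform way.
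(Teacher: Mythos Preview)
Your proposal is correct and takes essentially the same approach as the paper: a Hal\'asz--Montgomery duality argument with a smooth weight over the primes, combined with the Vinogradov--Korobov zero-free region to bound the resulting kernel. The only difference is cosmetic---the paper uses the Fej\'er-type weight $(2-p/P)$ (whose Mellin transform decays like $1/|s|^2$, yielding the $(1+|t|)^{-2}$ main-term decay) and simply refers back to the argument of Lemma~\ref{lem3}, whereas you use the exponential weight $e\cdot e^{-p/P}$ and spell out the Mellin contour shift explicitly.
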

\begin{proof}  We follow the strategy of Lemma \ref{lem6}.  Put $P(t) =\sum_{p\le P} x(p)p^{it}$, and let $I$ denote the integral to be estimated.  Then 
using Cauchy-Schwarz as in \eqref{2.61}, we obtain 
\begin{equation} 
\label{7.9} 
I^2 \le \Big( \sum_{p\le P} |x(p)|^2 \Big) \Big( \sum_{p\le 2P} \Big(2-\frac{p}{P}\Big) \Big| \int_{{\mathcal E}} P(t) p^{-it} dt \Big|^2\Big). 
\end{equation} 
Now expanding out the integral above, as in \eqref{2.62}, the second term of \eqref{7.9} is bounded by 
$$ 
\int_{t_1, t_2 \in {\mathcal E}} P(t_1) \overline{P(t_2)} \sum_{p\le 2P} \Big(2- \frac{p}{P}\Big) p^{i(t_2-t_1)} dt_1 dt_2. 
$$ 
Now in place of  \eqref{2.63}, we can argue as in Lemma \ref{lem3} to obtain 
$$ 
  \sum_{p\le 2P} \Big(2- \frac{p}{P}\Big) p^{it} \ll \frac{\pi(P)}{1+|t|^2} + P \exp\Big( -\frac{(\log P)}{(\log (T+P))^{2/3+\epsilon}}\Big), 
  $$ 
 where once again the small smoothing in the sum over $p$ produces the saving of $1+|t|^2$ in the first term.  Inserting this 
 bound in \eqref{7.9}, and proceeding as in the proof of Lemma \ref{lem6}, we readily obtain our lemma. 
\end{proof}

Returning to our proof, applying Lemma \ref{lem7.1} we see that the quantity in \eqref{7.8} may be 
bounded by 
$$ 
\ll \frac{X^2(\log P_j)^2}{(\log X)^{2\delta_1} P_j^2} \Big( \frac{P_j}{\log P_j} + P_j \exp\Big((\log X)^{1/6} - \frac{(\log X)^{17/20}}{(\log X)^{2/3+\epsilon}}\Big) \Big) 
\frac{P_j}{\log P_j} \ll 
 \frac{h^2}{(\log X)^{2\delta_1}}. 
$$ 
 Thus the contribution of $y \in {\mathcal E}_j$ to $I_j$ is also acceptably small, and therefore \eqref{7.3} follows.

\section{Sketch of the corollaries} 

We discuss briefly the proofs of Corollaries \ref{cor2} and \ref{cor3}, starting with Corollary \ref{cor2}.  
The indicator function of smooth numbers is multiplicative, and so Matom{\" a}ki and Radziwi{\l \l}'s general result for multiplicative functions  (see the discussion around \eqref{Mult1}) shows the following:  For any $\epsilon>0$ there exists $H(\epsilon)$ such that  for large enough $N$ 
  the set 
  $$ 
  {\mathcal E} = \{ x \in [\sqrt{N}/2, 2\sqrt{N}]: \  \ \text{ the interval } [x,x+H(\epsilon)] \text{ contains no } N^{\epsilon}\text{-smooth number}\},
  $$ 
  has measure $|{\mathcal E}| \le \epsilon \sqrt{N}$.   Now if for some $x\in [\sqrt{N},2\sqrt{N}]$ we have 
  $x\notin {\mathcal E}$ and also $N/x \notin {\mathcal E}$, then we would be able to find $N^{\epsilon}$-smooth numbers in $[x,x+H(\epsilon)]$ and also in $[N/x,N/x+H(\epsilon)]$ and their product would 
  be in $[N,N+4H(\epsilon)\sqrt{N}]$.   Thus if Corollary \ref{cor2} fails, we must have (with $\chi_{\mathcal E}$ 
  denoting the indicator function of ${\mathcal E}$) 
  $$ 
  \sqrt{N} \le \int_{\sqrt{N}}^{2\sqrt{N}} (\chi_{\mathcal E}(x) + \chi_{\mathcal E}(N/x)) dx \le 4|{\mathcal E}| \le 4\epsilon \sqrt{N},
  $$ 
  which is a contradiction.  

Now let us turn to Corollary \ref{cor3}.   First we recall a beautiful result of Wirsing (see \cite{GS}, or \cite{Ten}),  establishing a conjecture of Erd{\H o}s, which shows that if $f$ is any real valued multiplicative function with $-1\le f(n) \le 1$ then 
$$ 
\lim_{N \to \infty} \frac{1}{N} \sum_{n\le N} f(n)  = \prod_{p} \Big(1-\frac{1}{p}\Big)\Big(1+\frac{f(p)}{p}+\frac{f(p^2)}{p^2} +\ldots \Big). 
$$ 
The product above is zero if $\sum_p (1-f(p))/p$ diverges (this is the difficult part of Wirsing's theorem), 
and is strictly positive otherwise. 

In Corollary \ref{cor3},  we are only interested in the sign of $f$ and so we may 
assume that $f$ only takes the values $0$, $\pm 1$.   Wirsing's theorem applied to $|f|$ shows that condition (ii) of 
the corollary is equivalent to $\sum_{p, f(p)=0} 1/p <\infty$, and further the condition may be restated as  
$$ 
\lim_{N\to \infty} \frac{1}{N} \sum_{n\le N} |f(n)| =\alpha > 0.  
$$ 
Now applying Wirsing's theorem to $f$, it follows that 
$$ 
\lim_{N \to \infty} \frac{1}{N} \sum_{n\le N} f(n) = \beta  
$$ 
exists, and since $f(p) <0$ for some $p$ by condition (i), we also know that $0\le \beta < \alpha$.  
From \eqref{Mult1} we may see that if $h$ is large enough then for all but $\epsilon N$ 
integers $x \in [1,N]$ we must have 
$$ 
\sum_{x<n\le x+h} f(n) \le (\beta+\epsilon)h, \text{  and   } \sum_{x< n \le x+h} |f(n)| \ge (\alpha-\epsilon) h. 
$$ 
Since $\alpha > \beta$, if $\epsilon$ is small enough, this shows that for large enough $h$ (depending on 
$\epsilon$ and $f$) many intervals $[x,x+h]$ contain sign changes of $f$, which gives Corollary \ref{cor3}.











\begin{thebibliography}{111}

\bibitem{BE} Y. Buttkewitz and C. Elsholtz -- {\it Patterns and complexity of multiplicative functions}.  J. London Math. Soc. (2) {\bf 84} (2011) 578--594. 


\bibitem{C}  S.  Chowla -- {\it The Riemann hypothesis and Hilbert's tenth problem}.  Gordon and Breach, New York, (1965).  

\bibitem{Cr} E. S. Croot, III -- {\sl On the oscillations of multiplicative functions taking values $\pm 1$}. J. Number Theory, {\bf 98} (2003) 184--194.   

\bibitem{G} P. Gao -- {\it Mean square of the sum of the M{\" o}bius function in small intervals.  }
Preprint. 

\bibitem{GhSa}  A. Ghosh and P. Sarnak -- {\it Real zeros of holomorphic Hecke cusp forms}. J. Eur. Math. Soc. {\bf 14} (2012) 465--487. 

\bibitem{GC} I. J. Good and R. F.  Churchhouse -- {\it The Riemann Hypothesis and Pseudorandom Features of the M{\" o}bius Sequence}.   
Math. Comp. {\bf 22} (1968), 857--861.

\bibitem{GS}  A. Granville and K. Soundararajan -- {\it Decay of mean values of multiplicative functions}.  Canad. J. Math. {\bf 55} (2003), 1191--1230.  

\bibitem{Hal} G. Hal{\' a}sz -- {\it On the distribution of additive and mean-values of multiplicative functions}.  Studia Sci. Math. Hunger. {\bf 6} (1971), 211--233.


\bibitem{HT}  R.R. Hall and G. Tenenbaum -- {\it Effective mean value estimates for complex multiplicative functions}.  Math. Proc. Cambridge Phil. Soc. 
{\bf 110} (1991), 337--351.


\bibitem{HPW} G. Harman, J. Pintz, and D. Wolke -- {\it A note on the M{\" o}bius and Liouville functions}.  Studia Sci. Math. Hungar. {\bf 20}, (1985) 295--299.

\bibitem{Har} A. Harper -- {\it Sharp conditional bounds for moments of the zeta function.}  Preprint at {\tt arXiv:1305.4618}.

\bibitem{Hi}  A. Hildebrand -- {\it On consecutive values of the Liouville function.}  Enseign. Math. (2) {\bf 32} (1986), 219--226.  

\bibitem{Hi2} A. Hildebrand -- {\it Multiplicative functions at consecutive integers.} Math. Proc. Camb. Phil. Soc. {\bf 100} (1986), 229--236. 


\bibitem{H} M. Huxley -- {\it On the difference between consecutive primes}.  Invent. Math. {\bf 15}, 164--170 (1972). 



\bibitem{IK} H. Iwaniec and E. Kowalski -- {\it Analytic number theory}.  AMS Colloquium Publications {\bf 53}, (2004).  

\bibitem{L}  J. E. Littlewood -- {\it Littlewood's Miscellany}. Edited by B{\' e}la Bollob{\' a}s.  CUP (1986), 58--59.

\bibitem{MM} H. Maier and H. Montgomery -- {\it The sum of the M{\" o}bius function}.  Bull. London Math. Soc. {\bf 41} (2009), 213--226.  


\bibitem{MR1}  K. Matom{\" a}ki and M. Radziwi{\l}{\l} -- {\it Sign changes of Hecke eigenvalues}.  Geom. Funct. Anal. 
{\bf 25} (2015), 1937--1955. 

\bibitem{MR2} K. Matom{\" a}ki and M. Radziwi{\l}{\l} -- {\it Multiplicative functions in short intervals}.  Annals of Math. {\bf 183} (2016) 1015--1056.  

\bibitem{MR3} K. Matom{\" a}ki and M. Radziwi{\l}{\l} -- {\it A note on the Liouville function in short intervals}.  Preprint, {\tt arXiv:1502.02374}. 

\bibitem{MRT1}  K. Matom{\" a}ki, M. Radziwi{\l}{\l} and T. Tao -- {\it An averaged form of Chowla's conjecture}.  Algebra Number Theory 
{\bf 9} (2015), 2167--2196.

\bibitem{MRT2}  K.  Matom{\" a}ki, M. Radziwi{\l}{\l} and T. Tao -- {\it Sign patterns of the M{\" o}bus and Liouville functions}.  Preprint, {\tt arXiv:1509:01545}.   

\bibitem{Mon} H. Montgomery -- {\it Ten Lectures on the Interface Between Analytic Number Theory and Harmonic Analysis}.   CBMS {\bf 84}, AMS (1994).  

\bibitem{Mon2} H. Montgomery -- {\it A note on the mean values of multiplicative functions}.  Inst. Mittag-Leffler, Report 17. 

\bibitem{MS} H. Montgomery and K. Soundararajan -- {\it Primes in short intervals.}  Comm. Math. Phys.  {\bf 252} (2004), 589--617. 


\bibitem{Mo} Y. Motohashi -- {\it On the sum of the M{\" o}bius function in a short segment}.  Proc. Japan Acad. {\bf 52} (1976), 477--479. 

\bibitem{Ng}  N. Ng -- {\it The M{\" o}bius function in short intervals}.  Anatomy of Integers, 247--257, CRM Proc. Lecture Notes, {\bf 46}, Amer. Math. Soc. (2008). 

\bibitem{RS}  M Radziwi{\l \l} and K. Soundararajan -- {\it Moments and distribution of central $L$-values of quadratic twists of elliptic curves.}  Invent. Math. {\bf 202}, (2015), 
1029--1068.  

\bibitem{Ra} K. Ramachandra -- {\it Some problems of analytic number theory I}.   Acta Arith.  {\bf 31} (1976), 313--323.  

\bibitem{SV} B. Saffari and R. C. Vaughan -- {\it On the fractional parts of $x/n$ and related sequences II}.  Ann. Inst. Fourier {\bf 27} (1977) 1--30. 

\bibitem{Se}  A. Selberg -- {\it On the normal density of primes in short intervals, and the difference between consecutive primes.}  Arch. Math. Naturvid., {\bf 47} (1943), 87--105.

\bibitem{S} K. Soundararajan -- {\it Partial sums of the M{\" o}bius function}.  J. Reine Angew. Math. {\bf 631} (2009), 141--152. 

\bibitem{So2} K. Soundararajan -- {\it Smooth numbers in short intervals}. Preprint available at {\tt arXiv:1009.1591}.   


\bibitem{T1}  T. Tao -- {\it The logarithmically averaged Chowla and Elliott conjectures for two-point correlations}.   Preprint at {\tt arXiv:1509.05422v3}.  

\bibitem{T2} T. Tao -- {\it The Erd{\H o}s discrepancy problem}.  Discrete Analysis 2016:1, 29pp.  

\bibitem{T3} T. Tao -- {\it A cheap version of the theorems of Hal{\' a}sz and Matom{\" a}ki-Radziwi{\l \l}}.  Blog post, at  {\tt terrytao.wordpress.com/2015/02/24/}. 


\bibitem{Ten} G. Tenenbaum -- {\it Introduction to analytic and probabilistic number theory}.  Cambridge Studies in Advanced Mathematics, {\bf 46}, CUP (1995).  
 
\end{thebibliography}
\end{document}